\newtheorem{proposition}{Proposition}[section]
\newtheorem{lemma}[proposition]{Lemma}
\newtheorem{corollary}[proposition]{Corollary}
\newtheorem{theorem}[proposition]{Theorem}
\theoremstyle{definition}
\newtheorem{definition}[proposition]{Definition}
\theoremstyle{remark}
\newcommand{\thlabel}[1]{\label{th:#1}}
\newcommand{\thref}[1]{Theorem~\ref{th:#1}}
\newcommand{\selabel}[1]{\label{se:#1}}
\newcommand{\seref}[1]{Section~\ref{se:#1}}
\newcommand{\lelabel}[1]{\label{le:#1}}
\newcommand{\leref}[1]{Lemma~\ref{le:#1}}
\newcommand{\prlabel}[1]{\label{pr:#1}}
\newcommand{\prref}[1]{Proposition~\ref{pr:#1}}
\newcommand{\colabel}[1]{\label{co:#1}}
\newcommand{\coref}[1]{Corollary~\ref{co:#1}}
\newcommand{\delabel}[1]{\label{de:#1}}
\newcommand{\eqlabel}[1]{\label{eq:#1}}
\newcommand{\equref}[1]{(\ref{eq:#1})}
\def\equal#1{\smash{\mathop{=}\limits^{#1}}}
\newcommand{\End}{{\rm End}}
\newcommand{\Ker}{{\rm Ker}}
\newcommand{\im}{{\rm Im}}
\def\lan{\langle}
\def\ran{\rangle}
\def\ot{\otimes}
\def\sq{\square}
\def\units{{\mathbb G}_m}
\def\GG{{\mathbb G}}
\newcommand{\Cc}{\mathcal{C}}
\newcommand{\Mm}{\mathcal{M}}
\def\*C{{}^*\hspace*{-1pt}{\Cc}}
\def\text#1{{\rm {\rm #1}}}
\def\ul{\underline}
\begin{document}
\title[Strong group coalgebras]{Strong group coalgebras}
\author{S. Caenepeel}
\address{Faculty of Engineering,
Vrije Universiteit Brussel, B-1050 Brussels, Belgium}
\email{scaenepe@vub.ac.be}
\urladdr{http://homepages.vub.ac.be/\~{}scaenepe/}
\author{K. Janssen}
\address{Faculty of Engineering,
Vrije Universiteit Brussel, B-1050 Brussels, Belgium}
\email{krjansse@vub.ac.be}
\urladdr{http://homepages.vub.ac.be/\~{}krjansse/}
%\author{S.H. Wang}
%\address{Department of mathematics, Southeast University,
% Nanjing 210096, China}
%\email{shuanhwang2002@yahoo.com}

\subjclass[2000]{16W30, 16W50}

\keywords{group coalgebras, strongly graded ring, cleft extension}

\thanks{This research was supported by the research project G.0622.06 ``Deformation quantization methods
for algebras and categories with applications to quantum mechanics" from
FWO-Vlaanderen.}
\begin{abstract}
We introduce strong group coalgebras, as a generalization of strongly graded coalgebras.
We give several characterizations, and study two special types of strong group coalgebras,
namely cleft group algebras (or crossed coproduct group coalgebras) and smash
coproduct group coalgebras.
\end{abstract}
\maketitle

\section*{Introduction}\selabel{0}
Graded coalgebras were introduced by  N\u ast\u asescu and Torrecillas in
\cite{NasTor}, and further studied in \cite{DNRV}. At first glance, most results about graded
coalgebras seem to be completely similar to corresponding results about graded algebras.
However, there are some remarkable differences. For example, in \cite{NasTor},
the notion of strongly graded coalgebra is introduced. An interesting property is the
fact that strongly graded coalgebras only exist in the case where the grading group
$G$ is a finite group; we do not have such a property for strongly graded algebras.
In the coalgebra case, this property is basically a consequence of the intrinsic finiteness
that is built in the definition of a coalgebra.

Group coalgebras and Hopf group coalgebras were introduced by Turaev in \cite{Turaev}.
An algebraic study of Hopf group coalgebras was initiated in \cite{Virelizier}, and continued in
a series of papers by various authors. In \cite{CDL}, it was shown that group coalgebras
(resp. Hopf group coalgebras) are in fact coalgebras (resp. Hopf algebras) in a
well-chosen symmetric monoidal category.

Group coalgebras are a generalization of graded coalgebras, and the two notions
coincide if we work over a finite group $G$. For an infinite group $G$, group coalgebras
behave better as far as duality is concerned. The dual of a group coalgebra is a
graded algebra, and in some situations the category of comodules over the group coalgebra
is isomorphic to the category of graded modules over its dual graded algebra. These
duality results have been studied in \cite{CJW}, in the slightly more general context
of corings rather than coalgebras.

In this note, we introduce strong group coalgebras. If we work over a finite group $G$,
then strong group coalgebras correspond bijectively to strongly graded coalgebras.
However, strong group coalgebras can also exist in the case where $G$ is infinite.
The basic example is that of a cofree group coalgebra, playing the role of group algebra
in the graded algebra theory. We have several characterizations of strong group coalgebras,
see \prref{1.3} and \thref{1.7}. The most important one is perhaps the following:
for any group coalgebra, we have a pair of adjoint functors between the category of
comodules over the part of degree $e$, and the category of group comodules over the group
coalgebra; the group coalgebra is strong if and only if this adjunction is a pair of inverse
equivalences. Over a group coalgebra, one can define two types of comodules
(comparable to the situation in graded ring theory, where one has graded and
ungraded modules). In \seref{2}, we show that there is a relation between these two
types: comodules of one type are in fact comodules of the other type, but then over
the smash coproduct.

In Hopf algebra theory, cleft comodule algebras have been introduced and studied by
Doi (see \cite{Doi1,Doi2}). In \seref{3}, we study the corresponding notion of cocleft
group coalgebra. We have several equivalent characterizations of cocleft group
coalgebras, see \thref{3.4}. In the special situation where the part of degree $e$
of the group coalgebra is cocommutative, we can describe cocleft group coalgebras
using group cohomology, this is discussed in detail in \seref{4}.

\section{Strong group coalgebras}\selabel{1}
In what follows $k$ will denote a fixed field. Unadorned tensor products are meant to be taken over $k$.  By $G$ we will denote a fixed group with identity element $e$. In any category the identity morphism of an object $M$ will also be denoted by $M$. For categories $\Mm_i$, indexed by an arbitrary set $I$, we will denote the product category by $\prod_{i\in I}\Mm_i$. If all $\Mm_i$ are equal to one category $\Mm$, we will write $\Mm^I=\prod_{i\in I}\Mm$. The category of $k$-vector spaces is denoted by $\Mm_k$.

Consider a $G$-group coalgebra (or shorter, $G$-coalgebra) $\ul{C}$, i.e. a collection $\ul{C}=(C_\alpha)_{\alpha \in G}$ of $k$-vector spaces, with $k$-linear maps $\Delta_{\alpha,\beta}:C_{\alpha\beta} \to C_\alpha \ot C_\beta$ and $\varepsilon:C_e\to k$, for all $\alpha,\beta \in G$, satisfying
\begin{eqnarray*}
&&\hspace{-2cm}
(\Delta_{\alpha,\beta}\ot C_\gamma)\circ \Delta_{\alpha\beta, \gamma}= (C_\alpha\ot \Delta_{\beta,\gamma})\circ \Delta_{\alpha,\beta \gamma},\\
&&\hspace{-2cm}
(C_\alpha \ot\varepsilon)\circ \Delta_{\alpha, e}=C_\alpha= (\varepsilon \ot C_\alpha)\circ \Delta_{e,\alpha},
\end{eqnarray*}
for all $\alpha,\beta,\gamma \in G$. 
It is clear that $C_e$ is a usual $k$-coalgebra with comultiplication map $\Delta_{e,e}$ and counit map $\varepsilon$.

Given two $G$-coalgebras $\ul{C}$ and $\ul{D}$, a morphism $\ul{\varphi}:\ul{C}\to \ul{D}$ of $G$-coalgebras from $\ul{C}$ to $\ul{D}$ is a collection $\ul{\varphi}=(\varphi_\alpha:C_\alpha \to D_\alpha)_{\alpha\in G}$ of $k$-linear maps such that, $\varepsilon_{\ul{D}}\circ \varphi_e=\varepsilon_{\ul{C}}$ and
$(\varphi_\alpha\ot \varphi_\beta)\circ \Delta^{\ul{C}}_{\alpha,\beta}=\Delta_{\alpha,\beta}^{\ul{D}}\circ \varphi_{\alpha\beta},$
for all $\alpha,\beta \in G$.

Given a $G$-coalgebra $\ul{C}$ one can study two different types of modules over $\ul{C}$. 
Firstly, a $k$-vector space $M$ together with a family of $k$-linear maps $\{\rho_\alpha=\rho_\alpha^M:M\to M\ot C_\alpha\}_{\alpha\in G}$ such that $(M\ot \varepsilon) \circ \rho_e =M$ and
\begin{eqnarray*}
&&\hspace{-2cm}
(M\ot \Delta_{\alpha,\beta})\circ \rho_{\alpha\beta}= (\rho_\alpha \ot C_\beta)\circ \rho_\beta,
\end{eqnarray*}
for all $\alpha,\beta \in G$, is called a right $\ul{C}$-comodule. We will use the Sweedler-type notation $\rho_\alpha(m)=m_{[0]}\ot m_{[1,\alpha]}\in M\ot C_\alpha$, for all $m\in M$ and $\alpha\in G$, where summation is implicitly understood. Given two right $\ul{C}$-comodules $M$ and $N$, a $k$-linear map $f:M\to N$ is called right $\ul{C}$-colinear if $(f\ot C_\alpha)\circ \rho^M_\alpha=\rho_\alpha^N \circ f$, for all $\alpha\in G$. The category of right $\ul{C}$-comodules and right $\ul{C}$-colinear maps is denoted by $\Mm^{\ul{C}}$.

Secondly, a right $G$-group $\ul{C}$-comodule (or shortly, right $G$-$\ul{C}$-comodule) is a family of $k$-vector spaces $\ul{M}=(M_\alpha)_{\alpha\in G}$ equipped with a family of $k$-linear maps $\{\rho_{\alpha,\beta}=\rho_{\alpha,\beta}^{\ul{M}}:M_{\alpha\beta}\to M_\alpha \ot C_\beta\}_{\alpha,\beta \in G}$ such that $(M_\alpha \ot \varepsilon)\circ \rho_{\alpha,e}=M_\alpha$ and 
\begin{eqnarray*}
&&\hspace{-2cm}
(\rho_{\alpha,\beta}\ot C_\gamma)\circ \rho_{\alpha\beta, \gamma}= (M_\alpha\ot \Delta_{\beta,\gamma})\circ \rho_{\alpha,\beta \gamma},
\end{eqnarray*}
for all $\alpha,\beta,\gamma \in G$. We will use the notation $\rho_{\alpha,\beta}(m)=m_{[0,\alpha]}\ot m_{[1,\beta]}\in M_\alpha\ot C_\beta$, for all $m\in M_{\alpha\beta}$ and $\alpha,\beta \in G$. A right $G$-$\ul{C}$-colinear map between two right $G$-$\ul{C}$-comodules $\ul{M}$ and $\ul{N}$ is a family of $k$-linear maps $\ul{\varphi}=(\varphi_\alpha:M_\alpha \to N_\alpha)_{\alpha\in G}$ such that $(f_\alpha\ot C_\beta)\circ \rho^{\ul{M}}_{\alpha,\beta}=\rho^{\ul{N}}_{\alpha,\beta}\circ f_{\alpha\beta}$, for all $\alpha,\beta \in G$. The category of right $G$-$\ul{C}$-comodules and right $G$-$\ul{C}$-colinear maps will be denoted by $\Mm^{G,\ul{C}}$.

Given a right $G$-$\ul{C}$-comodule $\ul{M}$, it is clear that $F(\ul{M})=M_e$ is a right $C_e$-comodule with coaction map $\rho_{e,e}$. Clearly we have a functor $F:\Mm^{G,\ul{C}}\to \Mm^{C_e}$.  Given a right $C_e$-comodule $N$, we can define a right $G$-$\ul{C}$-comodule $G(N)$, by $G(N)_\alpha=N\Box_{C_e}C_\alpha$ and 
$$\rho_{\alpha,\beta}=N\Box_{C_e} \Delta_{\alpha,\beta}:N\Box_{C_e}C_{\alpha\beta}\to N\Box_{C_e} (C_\alpha \ot C_\beta)\cong (N\Box_{C_e}C_\alpha)\ot C_\beta,$$ for all $\alpha,\beta\in G$.

\begin{proposition}\prlabel{1.1}
$(F,G)$ is a pair of adjoint functors between the categories $\Mm^{G,\ul{C}}$ and $\Mm^{C_e}$.
\end{proposition}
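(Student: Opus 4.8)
The plan is to produce, for every right $G$-$\ul{C}$-comodule $\ul{M}$ and every right $C_e$-comodule $N$, a bijection
$$\Phi=\Phi_{\ul M,N}:\Hom_{\Mm^{C_e}}\bigl(F(\ul M),N\bigr)\to \Hom_{\Mm^{G,\ul C}}\bigl(\ul M,G(N)\bigr),$$
natural in $\ul{M}$ and $N$, thereby exhibiting $F$ as a left adjoint of $G$. Equivalently one could build the unit $\ul\eta_{\ul M}:\ul M\to GF(\ul M)$, whose $\alpha$-component is a corestriction of $\rho_{e,\alpha}$, together with the counit $N\ot\varepsilon:N\Box_{C_e}C_e\to N$, and verify the two triangle identities; the underlying computations are the same, so I will describe the $\Hom$-bijection route.

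Before that I would record that $G$ is a well-defined functor. Each $C_\alpha$ is a $C_e$-bicomodule via the coactions $\Delta_{e,\alpha}$ and $\Delta_{\alpha,e}$ (the bicomodule axioms being instances of the coassociativity and counit axioms of $\ul{C}$), so the cotensor products $N\Box_{C_e}C_\alpha$ make sense; and, $k$ being a field so that $-\ot C_\beta$ is exact, the canonical identification $N\Box_{C_e}(C_\alpha\ot C_\beta)\cong(N\Box_{C_e}C_\alpha)\ot C_\beta$ used in the definition of $G$ is legitimate. With this, the right $G$-$\ul{C}$-comodule axioms for $G(N)$ are obtained by applying $N\Box_{C_e}(-)$ to the coassociativity and counit axioms of $\ul{C}$, and functoriality of $G$ is immediate from that of $N\Box_{C_e}(-)$.

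For $\Phi$: given a $C_e$-colinear map $f:M_e\to N$, let $\Phi(f)_\alpha$ be the corestriction of $(f\ot C_\alpha)\circ\rho_{e,\alpha}:M_\alpha\to N\ot C_\alpha$. That this corestriction exists, i.e.\ that the image lies in $N\Box_{C_e}C_\alpha$, follows by combining the $C_e$-colinearity of $f$ with the coassociativity axiom of $\ul{M}$ at indices $(e,e,\alpha)$; that $\ul{\Phi(f)}=(\Phi(f)_\alpha)_{\alpha\in G}$ is $G$-$\ul{C}$-colinear is another instance of coassociativity, now at $(e,\alpha,\beta)$. Conversely, for a $G$-$\ul{C}$-colinear map $g:\ul M\to G(N)$ put $\Psi(g)=(N\ot\varepsilon)\circ g_e:M_e\to N$, using the standard identification $N\Box_{C_e}C_e\cong N$; this is $C_e$-colinear. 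One then checks that $\Psi\circ\Phi$ is the identity using the counit axiom $(M_e\ot\varepsilon)\circ\rho_{e,e}=M_e$ for $\ul{M}$, and that $\Phi\circ\Psi$ is the identity using the counit axioms together with the coassociativity of the $\Delta_{\alpha,\beta}$ that is built into the cotensor products; naturality in $N$ and in $\ul{M}$ is straightforward from the definitions.

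The only real friction I anticipate is the bookkeeping around the cotensor product: verifying that the various evident maps — the corestriction defining $\Phi(f)$, the identification $N\Box_{C_e}(C_\alpha\ot C_\beta)\cong(N\Box_{C_e}C_\alpha)\ot C_\beta$, and the inverse of $N\Box_{C_e}C_e\cong N$ — are genuinely well defined and colinear, and chasing elements through the equalizer that defines $\Box_{C_e}$. Once those identifications are pinned down, every equality that must be checked is a direct instance of an axiom already on the table for $G$-coalgebras or for $G$-$\ul{C}$-comodules, so no substantive computation remains.
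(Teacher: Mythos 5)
Your proposal is correct and is essentially the paper's argument in its equivalent $\Hom$-bijection packaging: your $\Phi$ is $f\mapsto G(f)\circ\ul{\eta}_{\ul M}$ with the same unit (the corestriction of $\rho_{e,\alpha}$) and the same counit ($\nu_N=(N\ot\varepsilon)$ on $N\Box_{C_e}C_e$) that the paper writes down and checks the triangle identities for. The verifications you outline (well-definedness of the corestriction via coassociativity at $(e,e,\alpha)$, colinearity via $(e,\alpha,\beta)$, and the two counit axioms) are exactly the computations in the paper's proof.
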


\begin{proof}
The unit of the adjunction is given by $\ul{\eta}_{\ul{M}}=(\eta_{\ul{M},\alpha})_{\alpha\in G}:\ul{M}\to G(M_e),$ where $\eta_{\ul{M},\alpha}:M_\alpha \to M_e\Box_{C_e} C_\alpha$ is the corestriction of $\rho_{e,\alpha}:M_\alpha \to M_e\ot C_\alpha$ to $M_e\Box_{C_e} C_\alpha$, for all $\ul{M}\in \Mm^{G,\ul{C}}$.
The counit is given by the natural isomorphisms $\nu_N:N\Box_{C_e} C_e\to N, \nu_N(\sum_i n_i\ot c_i)=\sum_i \varepsilon(c_i)n_i$, for all $N\in \Mm^{C_e}$.
We still need to verify, for all $\ul{M}\in \Mm^{G,\ul{C}}$, $N\in \Mm^{C_e}$ and $\alpha \in G$, that the following diagrams
$$
\xymatrix{
M_e\ar[r]^-{\eta_{\ul{M},e}} \ar[rd]_-{=}& M_e\Box_{C_e}C_e\ar[d]^-{\nu_{M_e}} && N\Box_{C_e}C_\alpha \ar[r]^-{\eta_{GN,\alpha}}\ar[rd]_-{=} & (N\Box_{C_e}C_e)\Box_{C_e} C_\alpha\ar[d]^-{\nu_N\Box_{C_e}C_\alpha}\\
&M_e &&& N\Box_{C_e} C_\alpha\\
}$$
commute. 
Indeed, for all $m\in M_e$ we have that
\begin{eqnarray*}
&&\hspace{-2cm}
(\nu_{M_e}\circ \eta_{\ul{M},e})(m)= \nu_{M_e}(m_{[0,e]}\ot m_{[1,e]})= \varepsilon(m_{[1,e]})m_{[0,e]}=m;
\end{eqnarray*}
for $\sum_i n_i\ot c_i\in N\Box_{C_e}C_\alpha$ we have that
\begin{eqnarray*}
&&\hspace{-2cm}
\big((\nu_N\Box_{C_e}C_\alpha)\circ \eta_{GN,\alpha}\big)\Big(\sum_i n_i\ot c_i\Big)
= \sum_i \nu_N(n_i\ot c_{i(1,e)})\ot c_{i(2,\alpha)}\\
&=& \sum_i n_i\ot \varepsilon(c_{i(1,e)})c_{i(2,\alpha)}= \sum_i n_i\ot c_i.
\end{eqnarray*}
\end{proof}

\begin{definition}\delabel{1.2}
A $G$-coalgebra $\ul{C}$ is called \emph{strong} if $\Delta_{\alpha,\beta}:C_{\alpha\beta}\to C_\alpha\ot C_\beta$ is a monomorphism, for all $\alpha,\beta \in G$.
\end{definition}

\begin{proposition}\prlabel{1.3}
Let $\ul{C}$ be a $G$-coalgebra. The following assertions are equivalent:
\begin{enumerate}
\item $\ul{C}$ is a strong $G$-coalgebra;
\item for all $\alpha\in G$, $\Delta_{\alpha,\alpha^{-1}}:C_e\to C_\alpha\ot C_{\alpha^{-1}}$ is a monomorphism;
\item for all $\ul{M}\in \Mm^{G,\ul{C}}$ and $\alpha,\beta \in G$, $\rho^{\ul{M}}_{\alpha,\beta}$ is a monomorphism.
\end{enumerate}
\end{proposition}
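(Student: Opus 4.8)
The plan is to establish the three statements in a cycle $(1)\Rightarrow(2)\Rightarrow(3)\Rightarrow(1)$, of which two links are essentially free. The implication $(1)\Rightarrow(2)$ is just the instance $\beta=\alpha^{-1}$ of $(1)$, since $\Delta_{\alpha,\alpha^{-1}}$ has source $C_{\alpha\alpha^{-1}}=C_e$. For $(3)\Rightarrow(1)$, I would first note that $\ul{C}$ is itself a right $G$-$\ul{C}$-comodule: taking $M_\alpha=C_\alpha$ and $\rho_{\alpha,\beta}=\Delta_{\alpha,\beta}$, the counit and coassociativity axioms defining a $G$-$\ul{C}$-comodule are literally the defining axioms of the $G$-coalgebra $\ul{C}$. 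Applying $(3)$ to $\ul{M}=\ul{C}$ then says exactly that every $\Delta_{\alpha,\beta}$ is a monomorphism, which is $(1)$.

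The substantive step is $(2)\Rightarrow(3)$. Fix $\ul{M}\in\Mm^{G,\ul{C}}$ and $\alpha,\beta\in G$; I want $\rho_{\alpha,\beta}:M_{\alpha\beta}\to M_\alpha\ot C_\beta$ to be injective, and the idea is to exhibit it as the first-applied factor of a composite that is visibly injective. Applying the coassociativity axiom for $\ul{M}$ to the triple $(\alpha\beta,\beta^{-1},\beta)$ in place of $(\alpha,\beta,\gamma)$, and using $(\alpha\beta)\beta^{-1}=\alpha$ and $\beta^{-1}\beta=e$, it becomes
$$(\rho_{\alpha\beta,\beta^{-1}}\ot C_\beta)\circ\rho_{\alpha,\beta}=(M_{\alpha\beta}\ot\Delta_{\beta^{-1},\beta})\circ\rho_{\alpha\beta,e}.$$
On the right, $\rho_{\alpha\beta,e}$ is a split monomorphism because $(M_{\alpha\beta}\ot\varepsilon)\circ\rho_{\alpha\beta,e}=M_{\alpha\beta}$, and $M_{\alpha\beta}\ot\Delta_{\beta^{-1},\beta}$ is a monomorphism because $\Delta_{\beta^{-1},\beta}=\Delta_{\beta^{-1},(\beta^{-1})^{-1}}$ is one by hypothesis $(2)$ and tensoring over the field $k$ is exact. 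Hence the right-hand side, and therefore the equal left-hand side, is injective; since $\rho_{\alpha,\beta}$ is the first map applied in that composite, $\rho_{\alpha,\beta}$ is injective.

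I expect the only real obstacle to be spotting this substitution: one must see that feeding $\beta^{-1}$ and then $\beta$ into the coassociativity axiom collapses the inner comultiplication to $\Delta_{\beta^{-1},\beta}$, which is governed by $(2)$, and the outer coaction to the counit-split map $\rho_{\alpha\beta,e}$, while leaving $\rho_{\alpha,\beta}$ untouched as an innermost factor. Everything else is routine: the trivial specialization $(1)\Rightarrow(2)$, the self-comodule observation for $(3)\Rightarrow(1)$, and the standard facts that a left factor of an injective map is injective and that $-\ot V$ preserves monomorphisms over a field. (The same manipulation applied to the coalgebra axiom itself yields $(2)\Rightarrow(1)$ directly, but the single cycle above is the most economical way to organize it.)
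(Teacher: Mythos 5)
Your proof is correct and follows essentially the same route as the paper: the cycle $(1)\Rightarrow(2)\Rightarrow(3)\Rightarrow(1)$ with the two easy links handled exactly as the authors do (they simply call them ``trivial''), and the key step $(2)\Rightarrow(3)$ obtained from the comodule coassociativity axiom applied to the triple $(\alpha\beta,\beta^{-1},\beta)$, giving $(\rho_{\alpha\beta,\beta^{-1}}\ot C_\beta)\circ\rho_{\alpha,\beta}=(M_{\alpha\beta}\ot\Delta_{\beta^{-1},\beta})\circ\rho_{\alpha\beta,e}$, whose right-hand side is monic because $\rho_{\alpha\beta,e}$ is split by the counit and $-\ot V$ is exact over a field. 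This is precisely the commutative square in the paper's proof, so there is nothing to add.
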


\begin{proof}
$\ul{(1) \Rightarrow (2)}$ and $\ul{(3)\Rightarrow (1)}$ are trivial. We prove $\ul{(2)\Rightarrow (3)}$. For $\ul{M}\in  \Mm^{G,\ul{C}}$ and $\alpha,\beta \in G$, consider the diagram
$$
\xymatrix{
M_{\alpha\beta}\ar[rr]^-{\rho_{\alpha\beta,e}}\ar[d]_-{\rho_{\alpha,\beta}} && M_{\alpha\beta}\ot C_e\ar[d]^-{M_{\alpha\beta}\ot \Delta_{\beta^{-1},\beta}}\\
M_\alpha\ot C_\beta\ar[rr]_-{\rho_{\alpha\beta,\beta^{-1}}\ot C_\beta}&& M_{\alpha\beta}\ot C_{\beta^{-1}}\ot C_\beta.
}$$
By assumption $ \Delta_{\beta^{-1},\beta}$ is monic, hence also $M_{\alpha\beta}\ot  \Delta_{\beta^{-1},\beta}$ (since we work over a field $k$, $M_{\alpha\beta}$ is $k$-flat). By the counit property $\rho_{\alpha\beta,e}$ is monic. Indeed, if $\rho_{\alpha\beta,e}(m)=m_{[0,\alpha\beta]}\ot m_{[1,e]}=0$, then $m=m_{[0,\alpha\beta]}\varepsilon(m_{[1,e]})=0$. Thus $(M_{\alpha\beta}\ot  \Delta_{\beta^{-1},\beta})\circ \rho_{\alpha\beta,e}= (\rho_{\alpha\beta,\beta^{-1}}\ot C_\beta)\circ \rho_{\alpha,\beta}$ is monic. It follows that $\rho_{\alpha,\beta}$ is monic, as wanted.
\end{proof}

\begin{proposition}\prlabel{1.4} 
Let $\ul{C}$ be a strong $G$-coalgebra, and $\ul{M}\in \Mm^{G,\ul{C}}$. The following assertions are equivalent:
\begin{enumerate}
\item $\ul{M}=0$, i.e., $M_\alpha=0$ for all $\alpha \in G$;
\item $M_\alpha=0$ for some $\alpha \in G$.
\end{enumerate}
\end{proposition}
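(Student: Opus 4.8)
The plan is to establish the nontrivial implication $(2)\Rightarrow(1)$; the converse $(1)\Rightarrow(2)$ is immediate by definition. The essential input is \prref{1.3}: since $\ul{C}$ is strong, part~(3) of that proposition guarantees that every structure map $\rho^{\ul{M}}_{\alpha,\beta}:M_{\alpha\beta}\to M_\alpha\ot C_\beta$ of an arbitrary right $G$-$\ul{C}$-comodule $\ul{M}$ is a monomorphism. So the strategy is to realize each component $M_\gamma$ as a subspace of something manufactured out of the vanishing component.

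Assume $M_\alpha=0$ for some fixed $\alpha\in G$, and let $\gamma\in G$ be arbitrary. Writing $\gamma=\alpha\cdot(\alpha^{-1}\gamma)$, I would consider the structure map
$$
\rho^{\ul{M}}_{\alpha,\,\alpha^{-1}\gamma}:M_\gamma\longrightarrow M_\alpha\ot C_{\alpha^{-1}\gamma}.
$$
By \prref{1.3}(3) this map is injective. Its codomain $M_\alpha\ot C_{\alpha^{-1}\gamma}$ is the zero space, since $M_\alpha=0$, and therefore $M_\gamma=0$. As $\gamma$ was arbitrary, $\ul{M}=0$, which is exactly statement~(1).

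I do not expect a real obstacle here: once \prref{1.3} is in hand the argument is essentially one line. The only point worth a moment's care is to channel the strength hypothesis through the comodule criterion \prref{1.3}(3) rather than through \deref{1.2} directly: the definition only controls the comultiplication maps $\Delta_{\alpha,\beta}$, whereas what is actually needed is the injectivity of the coactions $\rho^{\ul{M}}_{\alpha,\beta}$, and it is precisely \prref{1.3} that bridges the two.
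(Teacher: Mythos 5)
Your argument is correct and coincides with the paper's own proof: both use \prref{1.3}(3) to see that $\rho^{\ul{M}}_{\alpha,\alpha^{-1}\beta}:M_\beta\to M_\alpha\ot C_{\alpha^{-1}\beta}$ is monic and conclude $M_\beta=0$ from $M_\alpha=0$. Nothing to add.
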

\begin{proof}
\ul{$(1)\Rightarrow (2)$} is trivial, we prove
\ul{$(2)\Rightarrow (1)$}. Take $\beta \in G$ arbitrary. We have that $\rho_{\alpha,\alpha^{-1}\beta}:M_\beta \to M_\alpha \ot C_{\alpha^{-1}\beta}$ is monic. Now $M_\alpha=0$ implies that $M_\beta=0$, and we find our result. 
\end{proof}

\begin{corollary}\colabel{1.5}
Let $\ul{C}$ be a strong $G$-coalgebra, and $\ul{\varphi}:\ul{M}\to \ul{N}$ a morphism in $\Mm^{G,\ul{C}}$. 
We have that
\begin{enumerate}
\item $\varphi_\alpha$ is injective for all $\alpha \in G$ if and only if $\varphi_\alpha$ is injective for some $ \alpha\in G$;
\item $\varphi_\alpha$ is surjective for all $\alpha \in G$ if and only if $\varphi_\alpha$ is surjective for some $ \alpha\in G$;
\item $\varphi_\alpha$ is bijective for all $\alpha \in G$ if and only if $\varphi_\alpha$ is bijective for some $ \alpha\in G$;
\end{enumerate}
\end{corollary}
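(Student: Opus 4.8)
The plan is to reduce all three statements to \prref{1.4}, applied to the kernel and the cokernel of $\ul{\varphi}$, which I first have to realize as objects of $\Mm^{G,\ul{C}}$.

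First I would form $\ul{K}=(K_\alpha)_{\alpha\in G}$ with $K_\alpha=\Ker\varphi_\alpha$. Since $k$ is a field, the functor $-\ot C_\beta$ is exact, so $\Ker(\varphi_\alpha\ot C_\beta)=K_\alpha\ot C_\beta$. Colinearity of $\ul{\varphi}$ gives $(\varphi_\alpha\ot C_\beta)\circ\rho^{\ul{M}}_{\alpha,\beta}=\rho^{\ul{N}}_{\alpha,\beta}\circ\varphi_{\alpha\beta}$, so $\rho^{\ul{M}}_{\alpha,\beta}$ maps $K_{\alpha\beta}$ into $K_\alpha\ot C_\beta$, and the restricted maps $\rho^{\ul{K}}_{\alpha,\beta}:K_{\alpha\beta}\to K_\alpha\ot C_\beta$ inherit the comodule axioms from those of $\ul{M}$; hence $\ul{K}\in\Mm^{G,\ul{C}}$. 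Dually, I would set $Q_\alpha=\Coker\varphi_\alpha=N_\alpha/\im\varphi_\alpha$; again exactness of $-\ot C_\beta$ gives $Q_\alpha\ot C_\beta\cong(N_\alpha\ot C_\beta)/(\im\varphi_\alpha\ot C_\beta)$, and since the same colinearity identity shows $\rho^{\ul{N}}_{\alpha,\beta}$ carries $\im\varphi_{\alpha\beta}$ into $\im\varphi_\alpha\ot C_\beta$, the coaction descends to maps $\rho^{\ul{Q}}_{\alpha,\beta}:Q_{\alpha\beta}\to Q_\alpha\ot C_\beta$, making $\ul{Q}\in\Mm^{G,\ul{C}}$.

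With this in place the corollary is immediate. For (1): $\varphi_\alpha$ is injective if and only if $K_\alpha=0$, and by \prref{1.4} applied to $\ul{K}$ this holds for some $\alpha$ if and only if it holds for all $\alpha$. For (2): $\varphi_\alpha$ is surjective if and only if $Q_\alpha=0$, and \prref{1.4} applied to $\ul{Q}$ gives the equivalence. For (3): if $\varphi_\alpha$ is bijective for some $\alpha$, then it is in particular both injective and surjective at that $\alpha$, so by (1) and (2) it is injective and surjective — hence bijective — at every $\alpha$; the converse is trivial.

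The only substantive input is \prref{1.4}, so there is no real obstacle here. The one point that needs care is the verification that forming kernels and cokernels componentwise actually yields objects of $\Mm^{G,\ul{C}}$, and this is precisely where flatness over the field $k$ — i.e.\ exactness of $-\ot C_\beta$ — is used.
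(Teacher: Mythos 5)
Your proof is correct and follows essentially the same route as the paper: the paper's proof is the one-line ``This follows from \prref{1.4}'', with the verification that componentwise kernels and images (hence cokernels) of morphisms are again objects of $\Mm^{G,\ul{C}}$ deferred to the subsequent \leref{1.6}, using exactly the flatness-over-$k$ argument you give. You have simply filled in the details the paper leaves implicit, so there is nothing to object to.
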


\begin{proof}
This follows from \prref{1.4}.
\end{proof}

\begin{lemma}\lelabel{1.6}
Let $\ul{C}$ be a group coalgebra.
$\Mm^{G,\ul{C}}$ is an abelian category.
\end{lemma}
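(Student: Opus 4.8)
The plan is to verify the abelian-category axioms directly, carrying out every construction degree by degree and using that, over the field $k$, each $C_\beta$ is flat, so the endofunctor $-\ot C_\beta$ of $\Mm_k$ is exact and preserves direct sums. First one checks that $\Mm^{G,\ul{C}}$ is additive: the family of zero spaces is a zero object; for $\ul M,\ul N\in\Mm^{G,\ul C}$ the family $(M_\alpha\oplus N_\alpha)_{\alpha\in G}$, equipped with $\rho^{\ul M}_{\alpha,\beta}\oplus\rho^{\ul N}_{\alpha,\beta}$ under the identification $(M_\alpha\oplus N_\alpha)\ot C_\beta\cong(M_\alpha\ot C_\beta)\oplus(N_\alpha\ot C_\beta)$, is a $G$-$\ul C$-comodule which, with the evident injections and projections, is a biproduct; and the hom-sets are abelian groups under componentwise addition, with composition biadditive --- all of this because it already holds componentwise in $\Mm_k$.

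Next one constructs kernels and cokernels componentwise. Given $\ul\varphi=(\varphi_\alpha):\ul M\to\ul N$, set $K_\alpha=\Ker\varphi_\alpha$. Since $C_\beta$ is $k$-flat, $\Ker(\varphi_\alpha\ot C_\beta)=K_\alpha\ot C_\beta$ inside $M_\alpha\ot C_\beta$, and from $(\varphi_\alpha\ot C_\beta)\circ\rho^{\ul M}_{\alpha,\beta}=\rho^{\ul N}_{\alpha,\beta}\circ\varphi_{\alpha\beta}$ one sees that $\rho^{\ul M}_{\alpha,\beta}$ restricts to $K_{\alpha\beta}\to K_\alpha\ot C_\beta$; the counit and coassociativity conditions for $\ul K=(K_\alpha)$ are inherited from $\ul M$, and with the componentwise inclusion $\ul K\hookrightarrow\ul M$ one verifies that $\ul K$ is a kernel of $\ul\varphi$ in $\Mm^{G,\ul C}$ (any colinear $\ul\psi:\ul L\to\ul M$ with $\ul\varphi\circ\ul\psi=0$ has image in $\ul K$ degreewise, and factors uniquely through it). Dually, put $Q_\alpha=\Coker\varphi_\alpha=N_\alpha/\im\varphi_\alpha$; flatness gives $\im(\varphi_\alpha\ot C_\beta)=\im\varphi_\alpha\ot C_\beta$, so $\rho^{\ul N}_{\alpha,\beta}$ descends to $Q_{\alpha\beta}\to Q_\alpha\ot C_\beta$, making $\ul Q=(Q_\alpha)$ a $G$-$\ul C$-comodule which, with the componentwise projection $\ul N\twoheadrightarrow\ul Q$, is a cokernel of $\ul\varphi$.

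It remains to see that every monomorphism is a kernel and every epimorphism a cokernel. If $\ul\varphi$ is monic, applying it to the inclusion $\ul K\hookrightarrow\ul M$ of its kernel gives $\ul\varphi\circ\ul\iota=0=\ul\varphi\circ 0$, hence $\ul\iota=0$ and $K_\alpha=0$ for all $\alpha$; thus each $\varphi_\alpha$ is injective. Then $(\Ker(\Coker\ul\varphi))_\alpha=\im\varphi_\alpha$, and $\ul\varphi$ factors in $\Mm^{G,\ul C}$ as an isomorphism $\ul M\to\Ker(\Coker\ul\varphi)$ (degreewise $\varphi_\alpha:M_\alpha\to\im\varphi_\alpha$) followed by the inclusion into $\ul N$, so $\ul\varphi$ is the kernel of $\ul N\to\Coker\ul\varphi$. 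Dually, if $\ul\varphi$ is epic its cokernel vanishes, so each $\varphi_\alpha$ is surjective and $\ul\varphi$ is the cokernel of its own kernel. Hence $\Mm^{G,\ul C}$ is abelian. The only non-formal point --- and the main thing to get right --- is that the degreewise kernels and cokernels really carry $G$-$\ul C$-comodule structures; this rests entirely on the $k$-flatness of the $C_\beta$, exactly the device already used in the proof of \prref{1.3}. Everything else is the standard diagram-chase showing that comodules over a coalgebra form an abelian category.
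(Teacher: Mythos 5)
Your proof is correct and follows essentially the same route as the paper: kernels and images/cokernels are built degreewise, with the $k$-flatness of each $C_\beta$ being the one non-formal ingredient that makes $\Ker(\varphi_\alpha\ot C_\beta)=K_\alpha\ot C_\beta$ and lets the coactions restrict or descend. The paper's proof records only this key step (for $\ul K$ and $\ul I$) and leaves the remaining additive-category formalities implicit, which you have simply spelled out.
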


\begin{proof}
Let $\ul{\varphi}:\ul{M}\to \ul{N}$ be a morphism in $\Mm^{G,\ul{C}}$. Write $K_\alpha=\Ker(\varphi_\alpha)$ and $I_\alpha=\im(\varphi_\alpha)$. For $m\in M_{\alpha\beta}$, we have that
\begin{eqnarray*}
&&\hspace{-2cm}
(\varphi_\alpha\ot C_\beta)(\rho^{\ul{M}}_{\alpha,\beta}(m))=\varphi_\alpha(m_{[0,\alpha]})\ot m_{[1,\beta]}=\rho^{\ul{N}}_{\alpha,\beta}(\varphi_{\alpha\beta}(m)).
\end{eqnarray*} 
This implies that $\rho^{\ul{N}}_{\alpha,\beta}(I_{\alpha\beta})\subset I_\alpha\ot C_\beta$. Also if $m\in K_{\alpha\beta}$, then $\rho^{\ul{M}}_{\alpha,\beta}(m)\in \Ker(\varphi_\alpha\ot C_\beta)=K_\alpha\ot C_\beta$ (the last equality holds since $C_\beta$ is $k$-flat). So $\rho^{\ul{M}}_{\alpha,\beta}(K_{\alpha\beta})\subset K_\alpha \ot C_\beta$. This shows that $\ul{K}=(K_\alpha)_{\alpha\in G},\ul{I}=(I_\alpha)_{\alpha\in G}\in \Mm^{G,\ul{C}}$.
\end{proof}

\begin{theorem}\thlabel{1.7}
Let $\ul{C}$ be a $G$-coalgebra. The following assertions are equivalent:
\begin{enumerate}
\item $\ul{C}$ is a strong $G$-coalgebra;
\item $(F,G)$ is a pair of inverse equivalences between $\Mm^{G,\ul{C}}$ and $\Mm^{C_e}$;
\item $\Delta_{\alpha,\beta}$ corestricts to an isomorphism $C_{\alpha\beta}\to C_\alpha\Box_{C_e}C_\beta$, for all $\alpha,\beta\in G$.
\end{enumerate}
\end{theorem}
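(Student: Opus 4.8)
The plan is to run the cycle $(1)\Rightarrow(2)\Rightarrow(3)\Rightarrow(1)$. The only implication carrying real content is $(1)\Rightarrow(2)$, and even there the injectivity of the maps in question is free (it is exactly \prref{1.3}), so the delicate point is \emph{surjectivity}. Rather than prove surjectivity by hand, I will pin down the degree-$e$ behaviour of the adjunction unit through the triangle identities and then propagate bijectivity to all degrees by invoking \coref{1.5}.

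\emph{$(1)\Rightarrow(2)$.} By \prref{1.1} the counit $\nu_N:N\Box_{C_e}C_e\to N$ is already a natural isomorphism, so $(F,G)$ will be a pair of inverse equivalences as soon as the unit $\ul{\eta}$ is a natural isomorphism. The triangle identity for $(F,G)$ reads $\nu_{M_e}\circ\eta_{\ul{M},e}=M_e$ --- this is precisely the left-hand triangle checked in the proof of \prref{1.1} --- and since $\nu_{M_e}$ is invertible, $\eta_{\ul{M},e}$ is an isomorphism for \emph{any} $G$-coalgebra. Now let $\ul{C}$ be strong. Then $\ul{\eta}_{\ul{M}}:\ul{M}\to G(M_e)$ is a morphism in $\Mm^{G,\ul{C}}$ whose $e$-component is bijective, so by \coref{1.5}(3) every component $\eta_{\ul{M},\alpha}$ is bijective; hence $\ul{\eta}_{\ul{M}}$ is an isomorphism in $\Mm^{G,\ul{C}}$. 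As $\ul{\eta}$ is natural (\prref{1.1}), both unit and counit are natural isomorphisms, and therefore $(F,G)$ is a pair of inverse equivalences.

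\emph{$(2)\Rightarrow(3)$ and $(3)\Rightarrow(1)$.} For $(2)\Rightarrow(3)$, fix $\sigma\in G$ and consider the family $\ul{C}^{(\sigma)}=(C_{\sigma\gamma})_{\gamma\in G}$ with structure maps $\Delta_{\sigma\gamma,\beta}:C_{\sigma\gamma\beta}\to C_{\sigma\gamma}\ot C_\beta$. Rewriting the coassociativity and counit axioms of $\ul{C}$ with the first index uniformly shifted by $\sigma$ shows that $\ul{C}^{(\sigma)}\in\Mm^{G,\ul{C}}$, and $F(\ul{C}^{(\sigma)})=C_\sigma$. By the description of the unit in \prref{1.1}, its $\beta$-component $\eta_{\ul{C}^{(\sigma)},\beta}:C_{\sigma\beta}\to C_\sigma\Box_{C_e}C_\beta$ is the corestriction of $\Delta_{\sigma,\beta}$. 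Assertion (2) makes the unit an isomorphism, so this corestriction is an isomorphism; as $\sigma$ and $\beta$ range over $G$ this is exactly (3). For $(3)\Rightarrow(1)$: the corestriction of $\Delta_{\alpha,\beta}$ to $C_\alpha\Box_{C_e}C_\beta$ equals $\Delta_{\alpha,\beta}$ followed by the inclusion $C_\alpha\Box_{C_e}C_\beta\hookrightarrow C_\alpha\ot C_\beta$, so if it is an isomorphism then $\Delta_{\alpha,\beta}$ is a monomorphism and $\ul{C}$ is strong. The only loose end in this part is the routine check that $\ul{C}^{(\sigma)}$ satisfies the $G$-$\ul{C}$-comodule axioms, which is a direct transcription of the axioms of $\ul{C}$.
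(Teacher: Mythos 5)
Your proposal is correct and follows essentially the same route as the paper: bijectivity of the unit at degree $e$ (which you justify, slightly more explicitly than the paper, via the triangle identity and the invertibility of $\nu$) propagated to all degrees by \coref{1.5}, the $\sigma$-suspension $\ul{C}_\sigma$ for $(2)\Rightarrow(3)$, and the observation that an isomorphic corestriction forces $\Delta_{\alpha,\beta}$ to be monic for $(3)\Rightarrow(1)$. No gaps.
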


\begin{proof}
\ul{$(1)\Rightarrow (2)$}. 
It suffices to show that, for all $\alpha \in G$ and $\ul{M}\in \Mm^{G,\ul{C}}$, $\eta_{\ul{M},\alpha}:M_\alpha \to M_e\Box_{C_e}C_\alpha$ is bijective. Since $\eta_{\ul{M},e}:M_e\to M_e\Box_{C_e} C_e$ is bijective, the result follows from \coref{1.5}.\\
\ul{$(2)\Rightarrow (3)$}. Consider the $\sigma$-suspension
$\ul{C}_\sigma=(C_{\sigma\alpha})_{\alpha\in G}$ of $\ul{C}$ (see also \cite[Section 3]{NasTor}).
The right $G\hbox{-}\ul{C}$-comodule structure on $\ul{C}_\sigma$ is given by the maps
$\Delta_{\sigma\alpha,\beta}:\ C_{\sigma\alpha\beta}\to C_{\sigma\alpha}\ot C_\beta$. Since
$(F,G)$ is a pair of inverse equivalences, 
$\eta_{\ul{C(\alpha)},\beta}=\Delta_{\alpha,\beta}:C_{\alpha\beta}\to C_\alpha\Box_{C_e}C_\beta$ is an isomorphism, for all $\alpha,\beta \in G$.\\
\ul{$(3)\Rightarrow (1)$}. 
It follows from (3) that all maps $\Delta_{\alpha,\beta}:C_{\alpha\beta}\to C_\alpha \ot C_\beta$ are monomorphic.
\end{proof}

Let $\ul{C}$ be a $G$-coalgebra. We can construct a graded $k$-algebra $R$ as follows
(see \cite[Sec. 5]{CJW}). For $\alpha\in G$, let $R_\alpha=C^*_{\alpha^{-1}}$. $R=
\oplus_{\alpha\in G} R_\alpha$ is a $G$-graded $k$-algebra, with multiplication
$$(f\#g)(c)=f(c_{(2,\alpha^{-1})})g(c_{(1,\beta^{-1})}),$$
for $f\in R_\alpha$, $g\in R_\beta$ and $c\in R_{\beta^{-1}\alpha^{-1}}$. The unit element is
$\varepsilon\in R_e$. The multiplication $m_{\alpha,\beta}:\ R_\alpha\ot R_\beta\to R_{\alpha\beta}$
is the composition of the dual of the comultiplication map $\Delta_{\beta^{-1},\alpha^{-1}}$ and
the canonical inclusion $R_\alpha\ot R_\beta=C^*_{\alpha^{-1}}\ot C^*_{\beta^{-1}}\to
(C_{\beta^{-1}}\ot C_{\alpha^{-1}})^*$. If $\ul{C}$ is homogeneously finite, this means that
every $C_\alpha$ is finite dimensional, then this canonical inclusion is an isomorphism,
and then $m_{\alpha,\beta}$ is surjective if and only if $\Delta_{\beta^{-1},\alpha^{-1}}$
is injective. By definition $R$ is strongly graded if and only if the maps $m_{\alpha,\beta}$
are surjective (see for example \cite{NVO}). This proves the following result.

\begin{proposition}\prlabel{1.8}
Let $\ul{C}$ be a homogeneously finite $G$-coalgebra. Then $\ul{C}$ is strong if and only if
$R=\oplus_{\alpha\in G} C^*_\alpha$ is a strongly graded $k$-algebra.
\end{proposition}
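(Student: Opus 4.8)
The plan is to read the equivalence off directly from the explicit description of the multiplication of $R$ recalled just above the statement, the only nontrivial input being homogeneous finiteness, which lets us pass freely between a $k$-linear map and its dual.

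First I would fix $\alpha,\beta\in G$ and use that the component $m_{\alpha,\beta}:R_\alpha\ot R_\beta\to R_{\alpha\beta}$ of the multiplication of $R$ factors as the dual map $\Delta_{\beta^{-1},\alpha^{-1}}^*:(C_{\beta^{-1}}\ot C_{\alpha^{-1}})^*\to C^*_{\beta^{-1}\alpha^{-1}}=R_{\alpha\beta}$ precomposed with the canonical map $\iota:R_\alpha\ot R_\beta=C^*_{\alpha^{-1}}\ot C^*_{\beta^{-1}}\to(C_{\beta^{-1}}\ot C_{\alpha^{-1}})^*$. Since $\ul C$ is homogeneously finite, every $C_\gamma$ is finite dimensional, so $\iota$ is an isomorphism; hence $m_{\alpha,\beta}$ is surjective if and only if $\Delta_{\beta^{-1},\alpha^{-1}}^*$ is. Finite-dimensionality of $C_{\beta^{-1}\alpha^{-1}}$, $C_{\beta^{-1}}$ and $C_{\alpha^{-1}}$ then yields the standard fact that $\Delta_{\beta^{-1},\alpha^{-1}}^*$ is surjective precisely when $\Delta_{\beta^{-1},\alpha^{-1}}$ is injective.

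Next I would invoke the definition of a strongly graded algebra: $R$ is strongly graded if and only if $R_\alpha R_\beta=R_{\alpha\beta}$ for all $\alpha,\beta\in G$, i.e.\ if and only if every $m_{\alpha,\beta}$ is surjective, since $\im(m_{\alpha,\beta})=R_\alpha R_\beta$. Combining this with the previous paragraph, $R$ is strongly graded if and only if $\Delta_{\beta^{-1},\alpha^{-1}}$ is injective for all $\alpha,\beta\in G$. Because $(\alpha,\beta)\mapsto(\beta^{-1},\alpha^{-1})$ is a bijection of $G\times G$ onto itself, this is equivalent to requiring $\Delta_{\gamma,\delta}$ to be injective for all $\gamma,\delta\in G$, which by \deref{1.2} is exactly the assertion that $\ul C$ is strong.

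I do not expect a genuine obstacle here: the heart of the matter is already packaged in the construction of $R$ preceding the statement, and what remains is bookkeeping with the indices. The two points deserving an explicit word are that the canonical map $C^*_{\alpha^{-1}}\ot C^*_{\beta^{-1}}\to(C_{\beta^{-1}}\ot C_{\alpha^{-1}})^*$ is bijective when all components are finite dimensional, and that a $k$-linear map between finite-dimensional spaces is injective if and only if its dual is surjective; both are routine.
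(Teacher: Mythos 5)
Your proposal is correct and follows exactly the paper's own argument: the paper proves this proposition in the paragraph preceding its statement, by factoring $m_{\alpha,\beta}$ through the canonical map $C^*_{\alpha^{-1}}\ot C^*_{\beta^{-1}}\to (C_{\beta^{-1}}\ot C_{\alpha^{-1}})^*$, using homogeneous finiteness to see that this map is an isomorphism, and concluding that $m_{\alpha,\beta}$ is surjective if and only if $\Delta_{\beta^{-1},\alpha^{-1}}$ is injective. Your explicit remarks on the index relabeling and the finite-dimensional duality fact are just the bookkeeping the paper leaves implicit.
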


\section{Group comodules and the smash coproduct}\selabel{2}
Let $\ul{C}$ be a $G$-coalgebra. We introduce the so-called \emph{smash coproduct $G$-coalgebra} $\ul{C}\rtimes kG$ of $\ul{C}$ and $kG$ as follows. For $\alpha \in G$ we put $(\ul{C}\rtimes kG)_\alpha=C_\alpha \rtimes kG=C_\alpha \ot kG$. For $\alpha,\beta\in G$ we define $\Delta_{\alpha,\beta}:C_{\alpha\beta}\rtimes kG \to (C_{\alpha}\rtimes kG)\ot (C_{\beta}\rtimes kG)$ by
$$\Delta_{\alpha,\beta}(c\rtimes \sigma)=(c_{(1,\alpha)}\rtimes \beta \sigma) \ot (c_{(2,\beta)}\rtimes \sigma),$$
for $c\in C_{\alpha\beta}$ and $\sigma \in G$. We define $\varepsilon:C_e\rtimes kG \to k$ by
$\varepsilon(c\rtimes \sigma)=\varepsilon(c)$,
for $c\in C_e$ and $\sigma \in G$.
\begin{lemma}
$\ul{C}\rtimes kG$ is a $G$-coalgebra.
\end{lemma}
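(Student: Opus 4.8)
The plan is to verify the two group-coalgebra axioms for $\ul{C}\rtimes kG$: the coassociativity identity relating $\Delta_{\alpha,\beta}$ and $\Delta_{\alpha\beta,\gamma}$, and the counit property involving $\varepsilon$. Both should follow by a direct computation, bookkeeping the extra $kG$-factor carefully and invoking the corresponding axioms for $\ul{C}$ together with the group law in $G$. First I would fix $c\in C_{\alpha\beta\gamma}$ and $\sigma\in G$ and compute both sides of
$$(\Delta_{\alpha,\beta}\ot (C_\gamma\rtimes kG))\circ\Delta_{\alpha\beta,\gamma}=(( C_\alpha\rtimes kG)\ot\Delta_{\beta,\gamma})\circ\Delta_{\alpha,\beta\gamma}$$
applied to $c\rtimes\sigma$. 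On the left, $\Delta_{\alpha\beta,\gamma}(c\rtimes\sigma)=(c_{(1,\alpha\beta)}\rtimes\gamma\sigma)\ot(c_{(2,\gamma)}\rtimes\sigma)$, and applying $\Delta_{\alpha,\beta}$ to the first tensorand splits $c_{(1,\alpha\beta)}$ further with the group element $\beta\cdot(\gamma\sigma)=\beta\gamma\sigma$ appearing in the first slot and $\gamma\sigma$ in the second. On the right, $\Delta_{\alpha,\beta\gamma}(c\rtimes\sigma)=(c_{(1,\alpha)}\rtimes\beta\gamma\sigma)\ot(c_{(2,\beta\gamma)}\rtimes\sigma)$, and applying $\Delta_{\beta,\gamma}$ to the second tensorand yields $\gamma\sigma$ in the middle slot and $\sigma$ in the last. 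Comparing, both sides equal
$$(c_{(1,\alpha)}\rtimes\beta\gamma\sigma)\ot(c_{(2,\beta)}\rtimes\gamma\sigma)\ot(c_{(3,\gamma)}\rtimes\sigma),$$
once one uses coassociativity of $\ul{C}$ to identify the iterated comultiplications; the $G$-labels match because the exponents of $\sigma$ are governed by the group multiplication, which is associative.

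Next I would check the counit axiom. Using $\Delta_{\alpha,e}(c\rtimes\sigma)=(c_{(1,\alpha)}\rtimes\sigma)\ot(c_{(2,e)}\rtimes\sigma)$ for $c\in C_\alpha$, applying $(C_\alpha\rtimes kG)\ot\varepsilon$ gives $c_{(1,\alpha)}\varepsilon(c_{(2,e)})\rtimes\sigma=c\rtimes\sigma$ by the counit property of $\ul{C}$; note the group element $e$ in the second slot is exactly what makes the second tensorand land in $C_e\rtimes kG$, the domain of $\varepsilon$. The computation for $(\varepsilon\ot(C_\alpha\rtimes kG))\circ\Delta_{e,\alpha}$ is symmetric: here $\Delta_{e,\alpha}(c\rtimes\sigma)=(c_{(1,e)}\rtimes\alpha\sigma)\ot(c_{(2,\alpha)}\rtimes\sigma)$, and applying $\varepsilon$ to the left tensorand yields $\varepsilon(c_{(1,e)})c_{(2,\alpha)}\rtimes\sigma=c\rtimes\sigma$. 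One should also record that each $\Delta_{\alpha,\beta}$ and $\varepsilon$ is $k$-linear, which is immediate from the formulas.

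I do not expect a genuine obstacle here; the statement is routine. The only mildly delicate point is keeping the group-element arguments consistent — in particular noticing that the "twist" by $\beta$ in the first coordinate of $\Delta_{\alpha,\beta}$ composes correctly under iteration (so that one gets $\beta\gamma\sigma$, not $\beta\gamma\sigma$ read in the wrong order or with the wrong factor), and that the degree labels on the Sweedler components of $c$ are assigned so that each tensorand lies in the correct $C_{(-)}\rtimes kG$. Once the indexing conventions are pinned down, both axioms reduce transparently to the axioms for $\ul{C}$ and the associativity of $G$.
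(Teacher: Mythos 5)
Your proposal is correct and follows essentially the same route as the paper: a direct verification that both iterated comultiplications applied to $c\rtimes\sigma$ yield $(c_{(1,\alpha)}\rtimes\beta\gamma\sigma)\ot(c_{(2,\beta)}\rtimes\gamma\sigma)\ot(c_{(3,\gamma)}\rtimes\sigma)$, followed by the two counit checks using $\Delta_{\alpha,e}$ and $\Delta_{e,\alpha}$. All the group-element bookkeeping in your computation matches the paper's.
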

\begin{proof}
Let $\alpha,\beta,\gamma,\sigma\in G$ and take $c\in C_{\alpha \beta \gamma}$. We then have that
\begin{eqnarray*}
&&\hspace{-1cm}
\big((\Delta_{\alpha,\beta}\ot (C_\gamma\rtimes kG))\circ \Delta_{\alpha\beta,\gamma}\big)(c\rtimes \sigma)
= \Delta_{\alpha,\beta}(c_{(1,\alpha\beta)}\rtimes \gamma\sigma) \ot (c_{(2,\gamma)}\rtimes \sigma)\\
&=& (c_{(1,\alpha)}\rtimes \beta\gamma\sigma) \ot (c_{(2,\beta)}\rtimes \gamma\sigma) \ot (c_{(3,\gamma)}\rtimes \sigma)\\
&=& (c_{(1,\alpha)}\rtimes \beta\gamma\sigma) \ot \Delta_{\beta,\gamma}(c_{(2,\beta\gamma)}\rtimes\sigma) \\
&=& \big(((C_\alpha \rtimes kG)\ot \Delta_{\beta,\gamma})\circ \Delta_{\alpha,\beta\gamma}\big)(c\rtimes \sigma).
\end{eqnarray*}
For $\alpha,\sigma \in G$ and $c\in C_\alpha$ we have
\begin{eqnarray*}
&&\hspace{-2cm}
\big( ((C_\alpha \rtimes kG)\ot \varepsilon)\circ \Delta_{\alpha,e}\big)(c\rtimes \sigma)
= \varepsilon(c_{(2,e)} \rtimes \sigma) c_{(1,\alpha)} \rtimes \sigma \\
&=&  \varepsilon(c_{(2,e)}) c_{(1,\alpha)} \rtimes \sigma = c\rtimes \sigma;\\
&&\hspace{-2cm}
\big( (\varepsilon \ot (C_\alpha \rtimes kG))\circ \Delta_{e,\alpha}\big)(c\rtimes \sigma)
= \varepsilon(c_{(1,e)} \rtimes \alpha\sigma) c_{(2,\alpha)} \rtimes \sigma \\
&=& \varepsilon(c_{(1,e)} ) c_{(2,\alpha)} \rtimes \sigma = c\rtimes \sigma.
\end{eqnarray*}
\end{proof}

\begin{proposition}\prlabel{2.2}
Let $\ul{C}$ be a $G$-coalgebra. The categories $\Mm^{\ul{C}\rtimes kG}$ and $\Mm^{G,\ul{C}}$ are isomorphic.
\end{proposition}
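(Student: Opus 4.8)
The plan is to write down functors $S:\Mm^{G,\ul C}\to\Mm^{\ul C\rtimes kG}$ and $T:\Mm^{\ul C\rtimes kG}\to\Mm^{G,\ul C}$ and to check that they are mutually inverse, so that the isomorphism is obtained on the nose. For $\ul M=(M_\sigma)_{\sigma\in G}$ I would set $S(\ul M)=\bigoplus_{\sigma\in G}M_\sigma$, with coaction $\rho_\alpha:S(\ul M)\to S(\ul M)\ot(C_\alpha\rtimes kG)$ given on a homogeneous element $m\in M_\sigma$ by $\rho_\alpha(m)=m_{[0,\sigma\alpha^{-1}]}\ot(m_{[1,\alpha]}\rtimes\sigma^{-1})$, where $m_{[0,\sigma\alpha^{-1}]}\ot m_{[1,\alpha]}=\rho^{\ul M}_{\sigma\alpha^{-1},\alpha}(m)\in M_{\sigma\alpha^{-1}}\ot C_\alpha$; on a morphism $\ul\varphi=(\varphi_\sigma)_\sigma$ put $S(\ul\varphi)=\bigoplus_\sigma\varphi_\sigma$. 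For the other direction, given $M\in\Mm^{\ul C\rtimes kG}$ the map $\tau:=(M\ot\varepsilon\ot kG)\circ\rho_e:M\to M\ot kG$ is a $kG$-comodule structure on $M$ (from the counit axiom, the $(e,e)$-instance of coassociativity, and $\varepsilon_{\ul C\rtimes kG}=\varepsilon_{C_e}\ot\varepsilon_{kG}$), so $M$ decomposes as $M=\bigoplus_{\nu\in G}M^\nu$ with $M^\nu=\{m\in M\mid\tau(m)=m\ot\nu\}$ — this direct sum is honest even for infinite $G$, since $\tau(m)$ is a finite sum for every $m$. I would then set $T(M)_\sigma=M^{\sigma^{-1}}$, define $\rho^{T(M)}_{\alpha,\beta}:T(M)_{\alpha\beta}\to T(M)_\alpha\ot C_\beta$ as the restriction of $\rho_\beta$ to $M^{(\alpha\beta)^{-1}}$ followed by the projection $M\ot C_\beta\ot kG\to M\ot C_\beta$ onto the $(\alpha\beta)^{-1}$-coefficient of $kG$, and on a morphism $f$ let $T(f)$ be the family of its restrictions to the homogeneous components.

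Then I would verify, roughly in this order: (a) $S(\ul M)$ satisfies the $\ul C\rtimes kG$-comodule axioms — the counit axiom is immediate, and the coassociativity axiom, after expanding $\Delta^{\ul C\rtimes kG}_{\alpha,\beta}$ on both sides, collapses to the coassociativity axiom for $\ul M$, the only non-formal point being that the group elements in the two $kG$-slots agree, which comes down to the identity $(\sigma\alpha^{-1})^{-1}=\alpha\sigma^{-1}$; (b) for $M\in\Mm^{\ul C\rtimes kG}$, two applications of coassociativity (the $(\beta,e)$- and $(e,\beta)$-instances, composed with the appropriate counit on the $C_e$-slot and dual-basis projections on the $kG$-slots) show that $\rho_\beta$ maps $M^\nu$ into $M^{\beta\nu}\ot C_\beta\ot k\nu$, which is exactly what makes $T(M)$ a genuine $G$-$\ul C$-comodule and makes the projection in the definition of $\rho^{T(M)}_{\alpha,\beta}$ harmless; (c) a $\ul C\rtimes kG$-colinear map $f:M\to N$ commutes with the $\tau$'s, hence is $kG$-colinear, hence homogeneous, so it is the direct sum of its restrictions $f_\sigma:M^\sigma\to N^\sigma$; (d) $S$ and $T$ are inverse to one another — on objects this is the observation that the $\tau$-grading of $\bigoplus_\sigma M_\sigma$ puts $M_\sigma$ in degree $\sigma^{-1}$ (again the counit axiom for $\ul M$) together with (b), which ensures the coaction is recovered verbatim; on morphisms it is clear from (c).

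The computations in (a), (b), (d) are unavoidable but mechanical Sweedler-calculus, each time reducing an axiom on one side to the matching axiom on the other side; I expect no real difficulty there. The one genuinely delicate ingredient — and the place that needs care — is the group-element bookkeeping: one has to identify the correct ``twist'', namely the occurrence of $\sigma^{-1}$ in the definition of $\rho_\alpha$ on $S(\ul M)$ and, dually, the relabelling $T(M)_\sigma=M^{\sigma^{-1}}$, and to confirm that it is forced by the shift $\Delta^{\ul C\rtimes kG}_{\alpha,\beta}(c\rtimes\sigma)=(c_{(1,\alpha)}\rtimes\beta\sigma)\ot(c_{(2,\beta)}\rtimes\sigma)$ built into the smash coproduct. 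With the wrong convention the coassociativity axiom for $S(\ul M)$ breaks down, so pinning this down correctly is really the crux of the argument.
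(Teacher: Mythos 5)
Your proposal is correct and follows essentially the same route as the paper: the same direct-sum functor with the $\sigma^{-1}$-twist in the $kG$-slot, the same grading of a $\ul{C}\rtimes kG$-comodule via the induced $kG$-coaction $(M\ot\varepsilon\ot kG)\circ\rho_e$ (the paper packages this as the coalgebra map $p_e(c\rtimes\sigma)=\varepsilon(c)\sigma^{-1}$, which just absorbs your relabelling $T(M)_\sigma=M^{\sigma^{-1}}$), and the same key containment $\rho_\beta(M^\nu)\subseteq M^{\beta\nu}\ot C_\beta\rtimes\nu$, which is exactly the content of the paper's Lemmas \ref{le:2.3} and \ref{le:2.4}. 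The remaining Sweedler computations you defer are indeed routine and are carried out explicitly in the paper.
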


\begin{proof}
Take $\ul{M}\in \Mm^{G,\ul{C}}$. Let $M=\bigoplus_{\alpha\in G}M_\alpha$, and define
$\rho_\beta:M \to M\ot (C_\beta \rtimes kG)$ as follows: for $m\in M_\sigma$, let
$$\rho_\beta(m)=m_{[0,\sigma \beta^{-1}]}\ot (m_{[1,\beta]}\rtimes \sigma^{-1}).$$
Then $M\in \Mm^{\ul{C}\rtimes kG}$. Indeed, for $m\in M_\sigma$ we have that
$$
((M\ot \varepsilon)\circ \rho_e)(m)
= (M\ot \varepsilon)(m_{[0,\sigma]}\ot (m_{[1,e]} \rtimes \sigma^{-1}))
= \varepsilon(m_{[1,e]})m_{[0,\sigma]}=m.
$$
We still need to verify the commutativity of the diagram
\begin{eqnarray}\eqlabel{diagram}
\xymatrix{
M\ar[rr]^-{\rho_{\alpha\beta}}\ar[d]_-{\rho_\beta} && M\ot(C_{\alpha\beta} \rtimes kG)\ar[d]^-{M\ot \Delta_{\alpha,\beta}}\\
M\ot(C_\beta \rtimes kG)\ar[rr]_-{\rho_\alpha \ot (C_\beta\rtimes kG)} && M\ot(C_\alpha \rtimes kG) \ot (C_\beta \rtimes kG).
}
\end{eqnarray}
Take $m\in M_\sigma$. We then have that
\begin{eqnarray*}
&&\hspace{-1cm}
\big( (M\ot \Delta_{\alpha,\beta}) \circ \rho_{\alpha\beta}\big)(m)
= (M\ot \Delta_{\alpha,\beta})(m_{[0,\sigma \beta^{-1}\alpha^{-1}]}\ot (m_{[1,\alpha\beta]}\rtimes \sigma^{-1}))\\
&=& m_{[0,\sigma \beta^{-1}\alpha^{-1}]}\ot (m_{[1,\alpha]}\rtimes \beta \sigma^{-1}) \ot (m_{[2,\beta]}\rtimes  \sigma^{-1}) \\
&=& \rho_\alpha(m_{[0,\sigma \beta^{-1}]}) \ot (m_{[1,\beta]}\rtimes  \sigma^{-1}) 
= \big((\rho_\alpha \ot (C_\beta\rtimes kG))\circ \rho_\beta \big)(m),
\end{eqnarray*}
as needed. Conversely, assume that $M\in \Mm^{\ul{C}\rtimes kG}$. For each $\alpha\in G$ we have a map $\rho_\alpha:M\to M\ot (C_\alpha \rtimes kG)$. $\rho_e$ turns $M$ into a right $C_e\rtimes kG$-comodule. Since $p_e:C_e\rtimes kG\to kG,p_e(c\rtimes \sigma)=\varepsilon(c)\sigma^{-1}$ is a coalgebra map, we have a right $kG$-coaction 
$(M\ot p_e)\circ \rho_e:M\to M\ot kG$
on $M$. This makes $M$ into a $G$-graded $k$-vector space
$M=\bigoplus_{\sigma\in G}M_\sigma$,
where $m\in M_\sigma$ if and only if $\rho_e(m)\in M\ot (C_e\rtimes \sigma^{-1})$. 
For $m\in M$, we introduce the notation
$$\rho_\alpha(m)= \sum_{\sigma\in G}m^\sigma_{[0]}\ot m^\sigma_{[1,\alpha]}\rtimes \sigma^{-1}\in M\ot(C_\alpha\rtimes kG).$$
With this notation we have that $m\in M_\tau$ if and only if $m^\sigma_{[0]}\ot m^\sigma_{[1,e]}=0$, for all $\sigma\neq \tau$.
We have that the diagram \equref{diagram} commutes. Let us write this down explicitely.
For all $m\in M$ and $\alpha,\beta \in G$, we have that
\begin{eqnarray*}
&&\hspace{-1cm}
\big( (M\ot \Delta_{\alpha,\beta}) \circ \rho_{\alpha\beta}\big)(m)
= (M\ot \Delta_{\alpha,\beta})\Big( \sum_\sigma m^\sigma_{[0]}\ot (m^\sigma_{[1,\alpha\beta]} \rtimes \sigma^{-1})\Big)\\
&=& \sum_\sigma m^\sigma_{[0]}\ot (m^\sigma_{[1,\alpha\beta](1,\alpha)} \rtimes \beta\sigma^{-1}) \ot
 (m^\sigma_{[1,\alpha\beta](2,\beta)} \rtimes \sigma^{-1})
\end{eqnarray*}
equals
\begin{eqnarray*}
&&\hspace{-1cm}
\big((\rho_\alpha \ot (C_\beta\rtimes kG))\circ \rho_\beta \big)(m)
= (\rho_\alpha \ot (C_\beta\rtimes kG))\Big(\sum_{\sigma}m^\sigma_{[0]}\ot (m^\sigma_{[1,\beta]}\rtimes \sigma^{-1}) \Big)\\
&=&\sum_{\sigma,\tau}(m^\sigma_{[0]})^\tau_{[0]}\ot ((m^\sigma_{[0]})^\tau_{[1,\alpha]}\rtimes \tau^{-1}) \ot (m^\sigma_{[1,\beta]}\rtimes \sigma^{-1}).
\end{eqnarray*}
We will refer to this equation as $(\star)$. Before we end the proof of \prref{2.2}, we state and prove two Lemmas.

\begin{lemma}\lelabel{2.3}
Let $\omega\in G$. The following assertions are equivalent:
\begin{enumerate}
\item[\emph{(i)}]$m\in M_\omega$;
\item[\emph{(ii)}]$\rho_\alpha(m)\in M\ot (C_\alpha\rtimes \omega^{-1})$, for all $\alpha\in G$;
\item[\emph{(iii)}]$m^\sigma_{[0]}\ot m^\sigma_{[1,\alpha]}=0$, for all $\omega\neq \sigma,\alpha\in G$.
\end{enumerate}
\end{lemma}
\begin{proof}
\ul{$(ii)\Leftrightarrow (iii)$} and \ul{$(iii)\Rightarrow (i)$} are obvious. We prove \ul{$(i)\Rightarrow (iii)$}. Take $\beta=e$ in $(\star)$. We view the equality $(\star)$ as an equality in $\bigoplus_{\sigma,\tau\in G}(M\ot C_\alpha \ot C_\beta)e_{\sigma,\tau}$. 
For $\sigma \neq \omega$, the $(\sigma^{-1},\sigma^{-1})$-component of $(\star)$ comes out as
\begin{eqnarray*}
&&\hspace{-1cm}
0= (m^\sigma_{[0]})^\sigma_{[0]}\ot (m^\sigma_{[0]})^\sigma_{[1,\alpha]} \ot m^\sigma_{[1,e]}
=m^\sigma_{[0]}\ot m^\sigma_{[1,\alpha](1,\alpha)} \ot m^\sigma_{[1,\alpha](2,e)}.
\end{eqnarray*}
If we apply $\varepsilon$ to the third tensor factor, then we find
$0=m^\sigma_{[0]}\ot m^\sigma_{[1,\alpha]},$
as needed.
\end{proof}

\begin{lemma}\lelabel{2.4}
Let $\omega\in G$ and $m\in M_\omega$. Then, for all $\beta\in G$,
$$\rho_\beta(m)\in M_{\omega\beta^{-1}}\ot C_\beta \rtimes \omega^{-1}.$$
\end{lemma}

\begin{proof}
By  \leref{2.3} $(\star)$ takes the form
\begin{eqnarray*}
&&\hspace{-2cm}
m^\omega_{[0]}\ot (m^\omega_{[1,\alpha\beta](1,\alpha)} \rtimes \beta\omega^{-1}) \ot
 (m^\omega_{[1,\alpha\beta](2,\beta)} \rtimes \omega^{-1})\\
 &=& \sum_{\tau}(m^\omega_{[0]})^\tau_{[0]}\ot ((m^\omega_{[0]})^\tau_{[1,\alpha]}\rtimes \tau^{-1}) \ot (m^\omega_{[1,\beta]}\rtimes \omega^{-1}).
\end{eqnarray*}
Fix $\tau\neq \omega\beta^{-1}$, and take the $(\tau^{-1},\omega^{-1})$-component of both sides. This gives
$$0=(m^\omega_{[0]})^\tau_{[0]}\ot (m^\omega_{[0]})^\tau_{[1,\alpha]}\ot m^\omega_{[1,\beta]},$$
and 
$$(\rho_\alpha \ot (C_\beta\rtimes kG))(\rho_\beta (m))\in M\ot(C_\alpha \rtimes \beta \omega^{-1})\ot (C_\beta \rtimes \omega^{-1}).$$
Hence $\rho_\beta(m)\in M_{\omega\beta^{-1}}\ot C_\beta \rtimes \omega^{-1}$.
\end{proof}

Now we complete the proof of \prref{2.2}. For $\alpha,\beta\in G$, and $m\in M_{\alpha\beta}$, we have
$$\rho_\beta(m)=m_{[0]}^{\alpha\beta}\ot m_{[1,\beta]}^{\alpha\beta} \rtimes \beta^{-1}\alpha^{-1}\in M_\alpha\ot C_\beta \rtimes \beta^{-1}\alpha^{-1}.$$
We define $\rho_{\alpha,\beta}:M_{\alpha\beta}\to M_\alpha\ot C_\beta$ by $\rho_{\alpha,\beta}(m)=m_{[0]}^{\alpha\beta}\ot m_{[1,\beta]}^{\alpha\beta}$.
Let us show that $\ul{M}=(M_\alpha)_{\alpha\in G}$ satisfies the coassociativity condition. If $m\in M_\omega$, then $\rho_\alpha(m)=m^\omega_{[0]}\ot m^\omega_{[1,\alpha]} \rtimes \omega^{-1}$.
Now take $m\in M_{\alpha\beta\gamma}$. Then
\begin{eqnarray*}
&&\hspace{-1cm}
((M\ot \Delta_{\beta,\gamma})\circ \rho_{\beta\gamma})(m)\\
&=& m_{[0]}^{\alpha\beta\gamma}\ot (m^{\alpha\beta\gamma}_{[1,\beta\gamma](1,\beta)} \rtimes \beta^{-1}\alpha^{-1}) \ot (m^{\alpha\beta\gamma}_{[1,\beta\gamma](2,\gamma)} \rtimes \gamma^{-1}\beta^{-1}\alpha^{-1})
\end{eqnarray*}
is equal to
\begin{eqnarray*}
&&\hspace{-1cm}
((\rho_\beta \ot C_\gamma)\circ \rho_{\gamma})(m)\\
&=& (m_{[0]}^{\alpha\beta\gamma})_{[0]}^{\alpha\beta} \ot ((m_{[0]}^{\alpha\beta\gamma})_{[1,\beta]}^{\alpha\beta} \rtimes \beta^{-1}\alpha^{-1}) \ot (m^{\alpha\beta\gamma}_{[1,\gamma]} \rtimes \gamma^{-1}\beta^{-1}\alpha^{-1}).
\end{eqnarray*}
Hence 
\begin{eqnarray*}
&&\hspace{-1cm}
((M_\alpha\ot \Delta_{\beta,\gamma})\circ \rho_{\alpha,\beta\gamma})(m)
=(M_\alpha\ot \Delta_{\beta,\gamma})(m_{[0]}^{\alpha\beta\gamma} \ot m^{\alpha\beta\gamma}_{[1,\beta\gamma]})\\
&=& (m_{[0]}^{\alpha\beta\gamma})_{[0]}^{\alpha\beta} \ot (m_{[0]}^{\alpha\beta\gamma})_{[1,\beta]}^{\alpha\beta} \ot m^{\alpha\beta\gamma}_{[1,\gamma]}
= ((\rho_{\alpha,\beta}\ot C_\gamma)\circ \rho_{\alpha\beta,\gamma})(m).
\end{eqnarray*}
Take $m\in M_\beta$. Then $\rho_e(m)=m^\beta_{[0]}\ot (m^\beta_{[1,e]}\rtimes \beta^{-1})$, and
$\rho_{\beta,e}(m)=m^\beta_{[0]}\ot m^\beta_{[1,e]}$. It follows that
$$((M\ot \varepsilon)\circ \rho_{\beta,e})(m)=((M\ot\varepsilon)\circ \rho_e)(m)=m,$$
so the counit property is also satisfied.
\end{proof}

Combining Proposition \ref{pr:1.1} and \ref{pr:2.2}, we obtain a pair of adjoint functors $(F',G')$ between
the categories $\Mm^{\ul{C}\rtimes kG}$ and $\Mm^{C_e}$. For $(M,(\rho_\alpha)_{\alpha\in G)
\in \Mm^{\ul{C}\rtimes kG}}$,
$$F'(M)=\{m\in M~|~\rho_e(m)\in M\ot (C_e\rtimes e)\}.$$
For $N\in \Mm^{C_e}$, $G'(N)=(\oplus_{\alpha\in G} N\sq_{C_e} C_\alpha, (\rho_\alpha)_{\alpha
\in G})$, with
$$\rho_\beta:\ \oplus_{\alpha\in G} N\sq_{C_e} C_\alpha\to \oplus_{\alpha\in G} (N\sq_{C_e} C_\alpha)
\otimes (C_\beta\rtimes kG)$$
defined as follows: for $\sum_i n_i\ot c_i\in N\sq_{C_e} C_\alpha$,
$$\rho_\beta(\sum_i n_i\ot c_i)=
\sum_i (n_i\ot c_{i(1,\alpha\beta^{-1})}\ot (c_{i(2,\beta)}\rtimes \sigma^{-1}.$$
It follows from \thref{1.7} and \prref{2.2} that $(F',G')$ is a pair of inverse equivalences if
and only if $\ul{C}$ is a strong $G$-coalgebra.

\section{Crossed coproducts and cocleft $G$-coalgebras}\selabel{3}
Let $k\langle G\rangle$ be the $G$-coalgebra defined by
$k\langle G\rangle_\sigma=kp_\sigma$, 
$\Delta_{\sigma,\tau}(p_{\sigma \tau})=p_\sigma \ot p_\tau$ and
$\varepsilon(p_e)=1$, for all $\sigma,\tau \in G$.
$k\langle G\rangle$ is even a Hopf $G$-coalgebra: every $kp_\sigma$ is a $k$-algebra (isomorphic to $k$), and the antipode maps $S_\sigma:\ kp_\sigma\to kp_{\sigma^{-1}}$ are 
given by $S_\sigma(p_\sigma)=p_{\sigma^{-1}}$. \\

Let $C$ be a coalgebra. Suppose that we have a weak $G$-action on $C$,
this is a collection of $k$-coalgebra maps $\lambda=\{\lambda_\alpha~|~\alpha\in G\}$.
Assume moreover that we have a collection of $k$-linear maps
$f=\{f_{\alpha,\beta}~|~\alpha,\beta\in G\}\subset C^*$. We assume that $f_{e,e}$ has a convolution
inverse $g_{e,e}$ and that
\begin{equation}\eqlabel{3.1.1}
\lambda_e(c)=f_{e,e}(c_{(1)})c_{(2)}g_{e,e}(c_{(3)}).
\end{equation}
for all $c\in C$. For $\alpha,\beta\in G$, consider the maps
$$\delta_{\alpha,\beta}:\ C\to C\ot C,~~\delta_{\alpha,\beta}(c)
=c_{(1)}\ot\lambda_\alpha(c_{(2)})f_{\alpha,\beta}(c_{(3)}).$$
Now let $C\rtimes k\lan G\ran=(C\rtimes p_\alpha)_{\alpha\in G}$, with
comultiplication and counit maps given by
$$\Delta_{\alpha,\beta}(c\rtimes p_{\alpha\beta})=(c_{(1)}\rtimes p_\alpha)\ot (\lambda_\alpha(c_{(2)})f_{\alpha,\beta}(c_{(3)})\rtimes p_\beta)$$
and
$$\varepsilon(c\rtimes p_e)=g_{e,e}(c),$$
for all $\alpha,\beta\in G$ and $c\in C$. Straightforward computations now show the following
result.

\begin{proposition}\prlabel{3.1}
Let $\lambda$ be a weak $G$-action on a coalgebra $C$, and $f$ a collection of maps
satisfying \equref{3.1.1}. Then $C\rtimes k\lan G\ran$ is a $G$-coalgebra if and only
if the following conditions are satisfied, for all $c\in C$ and $\alpha,\beta,\gamma\in G$:
\begin{eqnarray}
f_{e,\alpha}= f_{e,e}&;&f_{\alpha,e}=f_{e,e}\circ\lambda_{\alpha};\eqlabel{CU}\\
f_{\beta,\gamma}(\lambda_\alpha(c_{(1)}))f_{\alpha,\beta\gamma}(c_{(2)})
&=& f_{\alpha,\beta}(c_{(1)})f_{\alpha\beta,\gamma}(c_{(2)});\eqlabel{C}\\
(\lambda_\beta \circ \lambda_\alpha)(c_{(1)})f_{\alpha,\beta}(c_{(2)})
&=& f_{\alpha,\beta}(c_{(1)})\lambda_{\alpha\beta}(c_{(2)})\eqlabel{TC}.
\end{eqnarray}
\end{proposition}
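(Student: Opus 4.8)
The plan is to verify directly that the two structure axioms for a $G$-coalgebra — the coassociativity constraint and the counit constraint — translate, when evaluated on the proposed maps $\Delta_{\alpha,\beta}$ and $\varepsilon$ for $C\rtimes k\langle G\rangle$, into exactly the conditions \equref{CU}, \equref{C}, \equref{TC}. Since $k\langle G\rangle_\sigma = kp_\sigma$ is one-dimensional, each homogeneous component $C\rtimes p_\alpha$ is just a copy of $C$, so at the level of underlying vector spaces there is nothing to check; the content is entirely in the compatibility of the comultiplication formula with the weak action $\lambda$ and the cocycle $f$.

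First I would unravel the counit axiom. Applying $(\,(C\rtimes p_\alpha)\ot\varepsilon)\circ\Delta_{\alpha,e}$ to $c\rtimes p_\alpha$ gives $c_{(1)}\rtimes p_\alpha$ times the scalar $g_{e,e}(\lambda_\alpha(c_{(2)})f_{\alpha,e}(c_{(3)}))$; using that $f_{\alpha,e}\in C^*$ is a scalar-valued map this is $c_{(1)}\,g_{e,e}(\lambda_\alpha(c_{(2)}))\,f_{\alpha,e}(c_{(3)})\rtimes p_\alpha$, and demanding this equal $c\rtimes p_\alpha$ for all $c$ forces (via \equref{3.1.1} and the fact that $g_{e,e}$ is the convolution inverse of $f_{e,e}$) the identity $f_{\alpha,e}=f_{e,e}\circ\lambda_\alpha$. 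Symmetrically, $(\varepsilon\ot(C\rtimes p_\alpha))\circ\Delta_{e,\alpha}$ produces $g_{e,e}(c_{(1)})\,c_{(2)}\,f_{e,\alpha}(c_{(3)})\rtimes p_\alpha$, which using \equref{3.1.1} again collapses to $\lambda_e$-type expressions, and matching against the identity yields $f_{e,\alpha}=f_{e,e}$. So the counit axiom is equivalent to \equref{CU}; here one must be a little careful to run the convolution-inverse manipulations in the right order, but it is purely formal.

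Next I would expand both sides of the coassociativity axiom $(\Delta_{\alpha,\beta}\ot(C_\gamma\rtimes kG))\circ\Delta_{\alpha\beta,\gamma} = ((C_\alpha\rtimes kG)\ot\Delta_{\beta,\gamma})\circ\Delta_{\alpha,\beta\gamma}$ applied to $c\rtimes p_{\alpha\beta\gamma}$. The left side, after two applications of the formula for $\Delta$ and using coassociativity of $C$, becomes $c_{(1)}\rtimes p_\alpha \ \ot\ \lambda_\alpha(c_{(2)})\rtimes p_\beta \ \ot\ \lambda_{\alpha\beta}(c_{(3)})f_{\alpha\beta,\gamma}(c_{(4)})f_{\alpha,\beta}(c_{(5)})\rtimes p_\gamma$ — modulo careful bookkeeping of which Sweedler legs the scalars $f$ attach to and the fact that $\lambda_\alpha$ is a coalgebra map so it commutes past further comultiplications. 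The right side similarly becomes $c_{(1)}\rtimes p_\alpha\ \ot\ \lambda_\alpha(c_{(2)})\rtimes p_\beta\ \ot\ \lambda_\beta(\lambda_\alpha(c_{(3)}))f_{\beta,\gamma}(\lambda_\alpha(c_{(4)}))f_{\alpha,\beta\gamma}(c_{(5)})\rtimes p_\gamma$. The first two tensor legs agree automatically; equating the third legs (applying $\varepsilon$ to appropriate places to separate the scalar identity from the $\lambda$-valued identity) yields precisely \equref{C} and \equref{TC}.

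The main obstacle is purely organizational: keeping the Sweedler indices straight while simultaneously moving the coalgebra maps $\lambda_\alpha$ across comultiplications and peeling off the scalar-valued $f_{\alpha,\beta}$. To extract \equref{C} one applies the counit to the $\lambda$-carrying leg, isolating an identity between products of $f$'s; to extract \equref{TC} one instead keeps the $\lambda$-valued part and applies $\varepsilon$ where the pure scalars sit. Conversely, assuming \equref{CU}, \equref{C}, \equref{TC}, one reads the same computations backwards to see that both axioms hold, so the equivalence is complete. Because the paper says ``straightforward computations'', I would present this compactly, displaying the two expanded sides of coassociativity and the two expanded sides of the counit condition, and then simply observe that the displayed equalities hold for all $c$ if and only if \equref{CU}--\equref{TC} hold.
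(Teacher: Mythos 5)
Your overall strategy---expand both sides of the coassociativity and counit axioms for $C\rtimes k\lan G\ran$ and match the result against \equref{CU}, \equref{C} and \equref{TC}---is exactly what the paper intends: it offers no argument beyond the phrase ``straightforward computations'', so direct verification is the proof, and your treatment of the counit axiom is correct. However, your displayed expansion of the left-hand side of coassociativity is wrong in a way that matters. Applying $\Delta_{\alpha,\beta}\ot(C_\gamma\rtimes kG)$ after $\Delta_{\alpha\beta,\gamma}$ gives
$$(c_{(1)}\rtimes p_\alpha)\ot\bigl(\lambda_\alpha(c_{(2)})f_{\alpha,\beta}(c_{(3)})\rtimes p_\beta\bigr)\ot\bigl(\lambda_{\alpha\beta}(c_{(4)})f_{\alpha\beta,\gamma}(c_{(5)})\rtimes p_\gamma\bigr),$$
so the scalar $f_{\alpha,\beta}$ is evaluated on the Sweedler leg lying \emph{between} the $\lambda_\alpha$-leg and the $\lambda_{\alpha\beta}$-leg. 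You have slid it past $\lambda_{\alpha\beta}(c_{(4)})$ to the end, which for non-cocommutative $C$ changes the element of $C^{\ot 3}$ --- and the licence to perform precisely that slide is condition \equref{TC} itself, so the argument as written is circular at this point. In particular it is not true that ``the first two tensor legs agree automatically'': the second legs of the two sides differ by this scalar, and the correct statement is that the full identity in $C^{\ot 3}$ is equivalent to the conjunction of \equref{C} and \equref{TC}.

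The extraction in the ``only if'' direction also needs one more idea than ``apply $\varepsilon$ where the scalars sit''. Applying $\varepsilon\ot\varepsilon\ot\varepsilon$ (and using that the $\lambda$'s are counit-preserving) yields \equref{C} directly. To obtain \equref{TC}, apply $\varepsilon\ot\varepsilon\ot C$ and specialize to $\gamma=e$: by \equref{CU}, which is already available from the counit axiom, both trailing scalars become $f_{e,e}$ precomposed with a coalgebra map, the identity takes the form $f_{\alpha,\beta}(c_{(1)})\,T(\lambda_{\alpha\beta}(c_{(2)}))=T(\lambda_\beta(\lambda_\alpha(c_{(1)})))\,f_{\alpha,\beta}(c_{(2)})$ with $T(d)=d_{(1)}f_{e,e}(d_{(2)})$, and composing with the inverse map $d\mapsto d_{(1)}g_{e,e}(d_{(2)})$ gives \equref{TC}. (Note that only $f_{e,e}$ is assumed convolution invertible at this stage, so one cannot instead cancel $f_{\alpha\beta,\gamma}$ and $f_{\alpha,\beta\gamma}$ by convolving with their inverses.) With these two repairs your computation goes through, and the converse direction is, as you say, the same computation read backwards: first use \equref{TC} to move the scalar, then \equref{C}.
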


If $f$ satisfies the conditions of \prref{3.1}, and every $f_{\alpha,\beta}$ has a convolution inverse $g_{\alpha,\beta}$, then $f$ is called a factor set, and
$C\rtimes k\lan G\ran$ is called a crossed coproduct $G$-coalgebra.
$f$ is called
normalized if $f_{\alpha,e}=f_{e,\alpha}=\varepsilon_C$, for all $\alpha\in G$. Then
$\lambda_e=C$, and the counit of $C\rtimes k\lan G\ran$ is given by the formula
$\varepsilon(c\rtimes p_e)=\varepsilon_C(c)$.\\
If $f_{\alpha,\beta}=\varepsilon_C$, for all $\alpha,\beta\in G$, then we call
$C\rtimes k\lan G\ran$ a smash coproduct $G$-coalgebra. If, in addition,
$\lambda_\alpha=C$,
then $C\rtimes k\lan G\ran$ is the cofree $G$-coalgebra $C\lan G\ran$ introduced in \cite{CJW}.
In particular, if $C=k$, then we recover the $G$-coalgebra $k\lan G\ran$ introduced
at the beginning of this Section.\\

We will now show that the factor set $f$ can be chosen in such a way that it is normalized
To this end, we will apply the following construction. Let $\ul{C}$ be a $G$-coalgebra,
and $\ul{\varphi}:\ \ul{C}\to \ul{D}$ an isomorphism in $(\Mm_k)^G$. Then we can define
a $G$-coalgebra structure on $\ul{D}$ such that $\ul{\varphi}$ becomes an isomorphism
of $G$-coalgebras: the structure maps on $\ul{D}$ are $\varepsilon'=
\varepsilon\circ\varphi_e^{-1}:\ D_e\to k$ and
\begin{equation}\eqlabel{3.2.1}
\Delta'_{\alpha,\beta}=(\varphi_\alpha\ot\varphi_\beta)\circ \Delta_{\alpha,\beta}
\circ \varphi_{\alpha\beta}^{-1}:\ D_{\alpha\beta}\to D_\alpha\ot D_\beta.
\end{equation}

\begin{proposition}\prlabel{3.2}
Let $G$ be a group acting weakly on a coalgebra $C$, and $f$ a factor set.
Then there exists a set of $k$-coalgebra maps $\{\lambda'_{\alpha}~|~
\alpha\in G\}\subset \End(C)$ and a normalized factor set $f'$ such that
$C\rtimes_f k\lan G\ran \cong C\rtimes_{f'} k\lan G\ran $
as $G$-coalgebras.
\end{proposition}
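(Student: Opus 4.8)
The plan is to apply the twisting construction \equref{3.2.1} to a suitable isomorphism $\ul{\varphi}$ in $(\Mm_k)^G$, chosen so as to ``divide out'' the obstruction $f_{e,e}$ to normalization. Concretely, for each $\alpha\in G$ I would take $\varphi_\alpha:\ C\rtimes p_\alpha\to C\rtimes p_\alpha$, $\varphi_\alpha(c\rtimes p_\alpha)=g_{e,e}(c_{(1)})c_{(2)}\rtimes p_\alpha$. Since $g_{e,e}$ is convolution invertible in $C^*$ with inverse $f_{e,e}$, each $\varphi_\alpha$ is bijective with inverse $c\rtimes p_\alpha\mapsto f_{e,e}(c_{(1)})c_{(2)}\rtimes p_\alpha$, so $\ul{\varphi}=(\varphi_\alpha)_{\alpha\in G}$ is an isomorphism in $(\Mm_k)^G$. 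By the construction preceding the statement, $\ul{\varphi}$ transports the $G$-coalgebra structure of $C\rtimes_f k\lan G\ran$ to a $G$-coalgebra structure $(\Delta',\varepsilon')$ on the family $(C\rtimes p_\alpha)_{\alpha\in G}$ and becomes an isomorphism of $G$-coalgebras; it then remains to identify this transported structure as a crossed coproduct with a normalized factor set.

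First I would compute $\varepsilon'(c\rtimes p_e)=\varepsilon(\varphi_e^{-1}(c\rtimes p_e))=g_{e,e}(f_{e,e}(c_{(1)})c_{(2)})=(f_{e,e}*g_{e,e})(c)=\varepsilon_C(c)$. Next I would expand $\Delta'_{\alpha,\beta}=(\varphi_\alpha\ot\varphi_\beta)\circ\Delta_{\alpha,\beta}\circ\varphi_{\alpha\beta}^{-1}$ in Sweedler notation: $\varphi_{\alpha\beta}^{-1}$ produces a leading factor $f_{e,e}$, the definition of $\Delta_{\alpha,\beta}$ followed by $\varphi_\alpha$ on the first tensorand produces an adjacent factor $g_{e,e}$, and $f_{e,e}*g_{e,e}=\varepsilon_C$ collapses the pair; applying $\varphi_\beta$ to the second tensorand and inserting the trivial factor $\varepsilon_C=(f_{e,e}\circ\lambda_\alpha)*(g_{e,e}\circ\lambda_\alpha)$ at the right spot rewrites the outcome as
$$\Delta'_{\alpha,\beta}(c\rtimes p_{\alpha\beta})=(c_{(1)}\rtimes p_\alpha)\ot\big(\lambda'_\alpha(c_{(2)})f'_{\alpha,\beta}(c_{(3)})\rtimes p_\beta\big),$$
where $\lambda'_\alpha:=\tilde\lambda\circ\lambda_\alpha$ with $\tilde\lambda(c):=g_{e,e}(c_{(1)})c_{(2)}f_{e,e}(c_{(3)})$, and $f'_{\alpha,\beta}:=(g_{e,e}\circ\lambda_\alpha)*f_{\alpha,\beta}\in C^*$. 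Here $\tilde\lambda$ is a $k$-coalgebra endomorphism of $C$ (conjugation by the invertible element $g_{e,e}$), hence so is each $\lambda'_\alpha$, and $f'_{\alpha,\beta}$ has convolution inverse $g_{\alpha,\beta}*(f_{e,e}\circ\lambda_\alpha)$. Thus $(\Delta',\varepsilon')$ is precisely the crossed coproduct $C\rtimes_{f'}k\lan G\ran$ for the weak action $\lambda'$ and the invertible collection $f'$: condition \equref{3.1.1} for $(\lambda',f')$ reduces, using $g_{e,e}\circ\lambda_e=(f_{e,e}*g_{e,e})*g_{e,e}=g_{e,e}$ and hence $f'_{e,e}=\varepsilon_C$, to the identity $\lambda'_e=C$, which one checks directly by collapsing $f_{e,e}*g_{e,e}=\varepsilon_C$ twice; then \prref{3.1} ensures that $\lambda'$ and $f'$ satisfy \equref{CU}--\equref{TC}, because $C\rtimes_{f'}k\lan G\ran$ is a $G$-coalgebra (being isomorphic to $C\rtimes_f k\lan G\ran$).

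Finally I would check that $f'$ is normalized: by \equref{CU} we have $f_{\alpha,e}=f_{e,e}\circ\lambda_\alpha$, and since $\phi\mapsto\phi\circ\lambda_\alpha$ is an algebra endomorphism of $C^*$, $f'_{\alpha,e}=(g_{e,e}\circ\lambda_\alpha)*(f_{e,e}\circ\lambda_\alpha)=(g_{e,e}*f_{e,e})\circ\lambda_\alpha=\varepsilon_C$; and by \equref{CU} $f_{e,\alpha}=f_{e,e}$, so $f'_{e,\alpha}=(g_{e,e}\circ\lambda_e)*f_{e,e}=g_{e,e}*f_{e,e}=\varepsilon_C$. Hence $\ul{\varphi}$ exhibits $C\rtimes_f k\lan G\ran\cong C\rtimes_{f'}k\lan G\ran$ with $f'$ a normalized factor set. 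The one delicate point is the Sweedler expansion of $\Delta'_{\alpha,\beta}$: the raw output of \equref{3.2.1} does not split canonically as $\lambda'_\alpha(c_{(2)})f'_{\alpha,\beta}(c_{(3)})$, and one must choose the right $\lambda'$, $f'$ and insert the trivial convolution factor $(f_{e,e}\circ\lambda_\alpha)*(g_{e,e}\circ\lambda_\alpha)=\varepsilon_C$ in exactly the correct position for the two pieces to reassemble to the transported comultiplication; everything else is formal once \equref{3.2.1} and \prref{3.1} are invoked.
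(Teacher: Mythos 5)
Your proof is correct, but it takes a genuinely different route from the paper's. The paper chooses the \emph{minimal} twist: $\varphi_\alpha$ is the identity for $\alpha\neq e$ and only $\varphi_e(c\rtimes p_e)=c_{(1)}g_{e,e}(c_{(2)})\rtimes p_e$ is nontrivial; it then computes $\delta'_{\alpha,\beta}$ case by case, finding $\lambda'_\alpha=\lambda_\alpha$ for $\alpha\neq e$, $\lambda'_e=C$, and $f'_{\alpha,\beta}=f_{\alpha,\beta}$ except when one of $\alpha,\beta,\alpha\beta$ equals $e$. You instead twist \emph{every} component by left convolution with $g_{e,e}$, which makes the verification uniform in $\alpha,\beta$ (no case analysis), at the cost of conjugating all the $\lambda_\alpha$ by $g_{e,e}$ and multiplying every $f_{\alpha,\beta}$ by $g_{e,e}\circ\lambda_\alpha$. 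I checked your Sweedler computation: the transported comultiplication is $(c_{(1)}\rtimes p_\alpha)\ot\bigl(g_{e,e}(\lambda_\alpha(c_{(2)}))\lambda_\alpha(c_{(3)})f_{\alpha,\beta}(c_{(4)})\rtimes p_\beta\bigr)$, and inserting $(f_{e,e}\circ\lambda_\alpha)*(g_{e,e}\circ\lambda_\alpha)=\varepsilon_C$ does reassemble it as $\lambda'_\alpha(c_{(2)})f'_{\alpha,\beta}(c_{(3)})$ with your $\lambda'_\alpha=\tilde\lambda\circ\lambda_\alpha$ and $f'_{\alpha,\beta}=(g_{e,e}\circ\lambda_\alpha)*f_{\alpha,\beta}$; your invertibility, $\lambda'_e=C$, and normalization checks are also right, and invoking \prref{3.1} in the reverse direction to get \equref{CU}--\equref{TC} for the new data is legitimate since the transported structure is a $G$-coalgebra by construction. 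What the paper's version buys, and yours loses, is the explicit near-equality of $f$ and $f'$: in \leref{4.1} the authors read off $f_{\alpha,\beta}=f'_{\alpha,\beta}*\delta_1(h)_{\alpha,\beta}$ for a very simple cochain $h$ ($h_e=f_{e,e}$, $h_\alpha=\varepsilon_C$ otherwise), so that $f^{-1}*f'$ is visibly a coboundary; with your uniform twist the analogous comparison still works (take $h_\alpha=g_{e,e}$ for all $\alpha$ in the cocommutative setting, where the conjugation $\tilde\lambda$ is trivial), but it is not what the paper's later argument literally uses.
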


\begin{proof}
For every $\alpha\in G$, consider the isomorphism $\varphi_\alpha:\
C\rtimes p_\alpha\to C\rtimes p_\alpha$, defined as follows: $\varphi_\alpha=C\rtimes p_\alpha$
if $\alpha\neq e$ and
$\varphi_e(c\rtimes p_e)=c_{(1)}g_{e,e}(c_{2)})\rtimes p_e$.
The inverse of $\varphi_e$ is defined by the formula
$\varphi_e^{-1}(c\rtimes p_e)=c_{(1)}f_{e,e}(c_{2)})\rtimes p_e$.
Applying the above construction, we find a new $G$-coalgebra structure on $C\rtimes k\lan G\ran $.
The new counit $\varepsilon$ is defined as follows:
$$\varepsilon'(c\rtimes p_e)=g_{e,e}(c_{(1)}f_{e,e}(c_{(2)}))=\varepsilon(c).$$
We compute the new comultiplication maps $\delta'_{\alpha,\beta}:\
C\to C\ot C$ using \equref{3.2.1}. Clearly $\delta'_{\alpha,\beta}=\delta_{\alpha,\beta}$
if $\alpha\neq e$, $\beta\neq e$ and $\alpha\beta\neq e$. For $\alpha\neq e$, we compute
\begin{eqnarray*}
\delta'_{e,\alpha}(c)
&=&(\varphi_e\ot C)(\delta_{e,\alpha}(c))
= (\varphi_e\ot C)\bigl(c_{(1)}\ot \lambda_e(c_{(2)})f_{e,\alpha}(c_{(3)})\bigr)\\
&\equal{(\ref{eq:3.1.1},\ref{eq:CU})}&
c_{(1)}g_{e,e}(c_{(2)})\ot f_{e,e}(c_{(3)})c_{(4)}=c_{(1)}\ot c_{(2)};\\
\delta'_{\alpha,e}(c)&=&(C\ot \varphi_e)(\delta_{\alpha,e}(c))
=(C\ot \varphi_e)(c_{(1)}\ot \lambda_\alpha(c_{(2)})f_{\alpha,e}(c_{(3)}))\\
&\equal{\equref{CU}}& c_{(1)}\ot \lambda_\alpha(c_{(2)}) g_{e,e}(\lambda_\alpha(c_{(3)}))
f_{e,e}(\lambda_\alpha(c_{(4)}))\\
&=& c_{(1)}\ot \lambda_\alpha(c_{(2)}) \varepsilon(\lambda_\alpha(c_{(3)}))
=c_{(1)}\ot \lambda_{\alpha}(c_{(2)});\\
\delta'_{\alpha,\alpha^{-1}}(c)&=&\delta_{\alpha,\alpha^{-1}}(\varphi_e^{-1}(c))
=\delta_{\alpha,\alpha^{-1}}(c_{(1)}f_{e,e}(c_{(2)}))\\
&=& c_{(1)}\ot \lambda_\alpha(c_{(2)})f_{\alpha,\alpha^{-1}}(c_{(3)})f_{e,e}(c_{(4)});\\
\delta'_{e,e}(c)
&=&(\varphi_e\ot \varphi_e)(\delta_{e,e}(c_{(1)})f_{e,e}(c_{(2)}))\\
&=&(\varphi_e\ot \varphi_e)(c_{(1)}\ot\lambda_e(c_{(2)})f_{e,e}(c_{(3)})f_{e,e}(c_{(4)}))\\
&=&c_{(1)}g_{e,e}(c_{(2)})\ot\lambda_e(c_{(3)})g_{e,e}(\lambda_e(c_{(4)}))
f_{e,e}(c_{(5)})f_{e,e}(c_{(6)})\\
&\equal{\equref{3.1.1}}&c_{(1)}g_{e,e}(c_{(2)})\ot f_{e,e}(c_{(3)})c_{(4)} g_{e,e}(c_{(5)})f_{e,e}(c_{(6)})
g_{e,e}(c_{(7)})\\
&&~~~~g_{e,e}(c_{(8)})f_{e,e}(c_{(9)})f_{e,e}(c_{(10)})
= c_{(1)}\ot c_{(2)}.
\end{eqnarray*}
It follows that $C\rtimes_f k\lan G\ran\cong C\rtimes_{f'} k\lan G\ran$ with
$$\lambda'_e=C;~~\lambda'_\alpha=\lambda_\alpha~~{\rm if~}\alpha\neq e;$$
$$f'_{\alpha,e}=f'_{e,\alpha}=\varepsilon_C,~~{\rm for~all~}\alpha\in G;$$
$$f'_{\alpha,\alpha^{-1}}= f_{\alpha,\alpha^{-1}}*f_{e,e}~~{\rm if~}\alpha\neq e;$$
$$f'_{\alpha,\beta}=f_{\alpha,\beta}~~{\rm if~}\alpha\neq e,~\beta\neq e~{\rm and}~
\alpha\beta\neq e.$$
\end{proof}

Let $\ul{C}$ be a $G$-coalgebra.
In the sequel, we will consider morphisms $\ul{u}:\ \ul{C}\to k\langle G\rangle$ in 
the category $(\Mm_k)^G$. Such a morphism is given by a collection of maps
$\{u_\alpha\in C_\alpha^*~|~\alpha\in G\}$. The $\alpha$-component of $\ul{u}$
then sends $c\in C_\alpha$ to $u_\alpha(c)p_\alpha$. We say $\ul{u}$ is convolution
invertible if there exists a collection of maps
$\{v_\alpha\in C_{\alpha^{-1}}^*~|~\alpha\in G\}$ such that
\begin{equation}\eqlabel{3.2.2}
u_\alpha(c_{(1,\alpha)})v_\alpha(c_{(2,\alpha^{-1})})=v_\alpha(c_{(1,\alpha^{-1})})u_\alpha(c_{(2,\alpha)})=\varepsilon(c),
\end{equation}
for all $c\in C_e$.
If $\ul{u}$ is a morphism of $G$-coalgebras, then $\ul{u}$ is convolution invertible:
it suffices to take $v_{\alpha}=u_{\alpha^{-1}}$.

\begin{definition}\delabel{3.3}
A $G$-coalgebra $\ul{C}$ is called \emph{cocleft} over $C_e$ if there exists a \emph{convolution invertible} morphism $\ul{u}:\ul{C}\to k\langle G\rangle$ in $(\Mm_k)^G$.
\end{definition}

\begin{theorem}\thlabel{3.4}
For a $G$-coalgebra $\ul{C}$, the following conditions are equivalent.
\begin{enumerate}
\item $\ul{C}$ is cocleft;
\item $\ul{C}$ is isomorphic to a crossed coproduct $G$-coalgebra $C\rtimes_f k\lan G\ran$;
\item $\ul{C}$ is isomorphic to a crossed coproduct $G$-coalgebra $C\rtimes_f k\lan G\ran$,
with $f$ normalized;
\item $\ul{C}$ is a strong $G$-coalgebra, and every $C_\alpha$ is isomorphic to $C_e$
as a left $C_e$-comodule.
\end{enumerate}
\end{theorem}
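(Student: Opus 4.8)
The plan is to prove $(1)\Leftrightarrow(4)$ together with $(1)\Rightarrow(2)$, $(2)\Rightarrow(3)$ (which is precisely \prref{3.2}) and $(3)\Rightarrow(1)$. Everything rests on one observation: for fixed $\alpha$ the assignment $u_\alpha\mapsto\phi_{u_\alpha}:=(C_e\ot u_\alpha)\circ\Delta_{e,\alpha}$, i.e.\ $\phi_{u_\alpha}(c)=c_{(1,e)}u_\alpha(c_{(2,\alpha)})$, is a bijection from $C_\alpha^*$ onto the set of left $C_e$-colinear maps $C_\alpha\to C_e$ (with $C_\alpha$ carrying the left coaction $\Delta_{e,\alpha}$), its inverse being $\phi\mapsto\varepsilon\circ\phi$. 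This is the cofree-comodule adjunction, $(C_e,\Delta_{e,e})$ being the cofree left $C_e$-comodule on $k$.

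$(1)\Rightarrow(4)$: let $\ul u:\ul C\to k\lan G\ran$ be convolution invertible with inverse $(v_\alpha)_{\alpha\in G}$, $v_\alpha\in C^*_{\alpha^{-1}}$. That $\ul C$ is strong follows from \prref{1.3}: the map $C_\alpha\ot C_{\alpha^{-1}}\to C_e$, $x\ot y\mapsto x_{(1,e)}u_\alpha(x_{(2,\alpha)})v_\alpha(y)$, is a left inverse of $\Delta_{\alpha,\alpha^{-1}}$, as one sees by expanding $\Delta_{\alpha,\alpha^{-1}}$ with coassociativity and applying \equref{3.2.2}. For the normal-basis property put $\phi_\alpha:=\phi_{u_\alpha}$ and $\psi_\alpha:=(C_\alpha\ot v_\alpha)\circ\Delta_{\alpha,\alpha^{-1}}:\ C_e\to C_\alpha$; both are left $C_e$-colinear, and $\phi_\alpha\circ\psi_\alpha=C_e$, $\psi_\alpha\circ\phi_\alpha=C_\alpha$ are, after the same kind of expansion, exactly the two identities of \equref{3.2.2}, so $C_\alpha\cong C_e$ as left $C_e$-comodules. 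For $(4)\Rightarrow(1)$, fix left $C_e$-colinear isomorphisms $\phi_\alpha:\ C_\alpha\to C_e$ and put $u_\alpha:=\varepsilon\circ\phi_\alpha$, so $\phi_\alpha=\phi_{u_\alpha}$. The inverse $\phi_\alpha^{-1}:\ C_e\to C_\alpha$ is again left $C_e$-colinear; here strongness enters via \thref{1.7}, which supplies the co-Galois isomorphism $C_e\cong C_\alpha\Box_{C_e}C_{\alpha^{-1}}$ and thereby forces every left $C_e$-colinear map $C_e\to C_\alpha$ to have the form $c\mapsto c_{(1,\alpha)}v(c_{(2,\alpha^{-1})})$ for a unique $v\in C^*_{\alpha^{-1}}$. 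Writing $\phi_\alpha^{-1}$ in this shape defines $v_\alpha$, and $\phi_\alpha\circ\phi_\alpha^{-1}=C_e$, $\phi_\alpha^{-1}\circ\phi_\alpha=C_\alpha$ unwind to \equref{3.2.2}; thus $\ul u$ is convolution invertible and $\ul C$ is cocleft.

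$(1)\Rightarrow(2)$: with $\ul u,\ul v$ as above, assume $u_e=\varepsilon$ (a harmless normalization) and set $\phi_\alpha=\phi_{u_\alpha}$, which are isomorphisms with inverses $\psi_\alpha$ by the proof of $(1)\Rightarrow(4)$ and satisfy $\phi_e=C_e$. Transport the $G$-coalgebra structure of $\ul C$ along the $\phi_\alpha$ (formula \equref{3.2.1}) to the family $\ul C'$ with $C'_\alpha=C_e$ for all $\alpha$; then $\ul C'\cong\ul C$, so $\ul C'$ is strong. Left colinearity of $\phi_\alpha$ forces $\Delta'_{e,\alpha}=\Delta_{e,e}$, and coassociativity of $\ul C'$ then forces $\Delta'_{\alpha,\beta}(c)=c_{(1)}\ot\lambda_\alpha(c_{(2)})f_{\alpha,\beta}(c_{(3)})$, where $\lambda_\alpha:=(\varepsilon\ot C_e)\circ\Delta'_{\alpha,e}$ is a coalgebra endomorphism of $C_e$ and $f_{\alpha,\beta}:=(\varepsilon\ot\varepsilon)\circ\Delta'_{\alpha,\beta}\in C_e^*$; moreover $f_{\alpha,\beta}$ is convolution invertible, because $c\mapsto c_{(1)}f_{\alpha,\beta}(c_{(2)})$ is the composite of the co-Galois isomorphism $\Delta'_{\alpha,\beta}:\ C_e\cong C_e\Box_{C_e}C_e$ (\thref{1.7} for $\ul C'$) with a counit isomorphism. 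The remaining identities \equref{CU}, \equref{C}, \equref{TC} are just coassociativity and the counit axiom for $\ul C'$ rewritten, so $\ul C\cong\ul C'=C_e\rtimes_f k\lan G\ran$. Then $(2)\Rightarrow(3)$ is \prref{3.2}, and $(3)\Rightarrow(1)$ goes as follows: on $C\rtimes_f k\lan G\ran$ with $f$ normalized, set $u_\alpha(c\rtimes p_\alpha)=\varepsilon_C(c)$ and $v_\alpha(c\rtimes p_{\alpha^{-1}})=g_{\alpha^{-1},\alpha}(c)$; a direct computation using \equref{C} with $f$ normalized (which gives $f_{\alpha^{-1},\alpha}\circ\lambda_\alpha=f_{\alpha,\alpha^{-1}}$) and the fact that $\lambda_\alpha$ is a coalgebra map turns both sides of \equref{3.2.2} into $\varepsilon_C$.

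The delicate point is the implication $(4)\Rightarrow(1)$, specifically the claim that strongness makes every left $C_e$-colinear map $C_e\to C_\alpha$ factor through $\Delta_{\alpha,\alpha^{-1}}$ and a functional on $C_{\alpha^{-1}}$; this is the coalgebra counterpart of the Galois property, uses \thref{1.7} essentially, and has no analogue over a single coalgebra. By contrast the cocycle bookkeeping in $(1)\Rightarrow(2)$ — verifying \equref{CU}, \equref{C}, \equref{TC} and the coalgebra-map property of $\lambda_\alpha$ — is lengthy but mechanical.
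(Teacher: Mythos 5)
Your overall architecture is sound and in places genuinely different from the paper's: the paper runs the single cycle $(1)\Rightarrow(2)\Rightarrow(3)\Rightarrow(4)\Rightarrow(1)$, whereas you prove $(1)\Leftrightarrow(4)$ directly, derive the crossed coproduct form in $(1)\Rightarrow(2)$ structurally (transport along $\phi_\alpha$, then read off $\lambda_\alpha$ and $f_{\alpha,\beta}$ from coassociativity and cofreeness, rather than the paper's explicit formulas \equref{3.4.1}--\equref{3.4.2} and direct verification), and prove $(3)\Rightarrow(1)$ by exhibiting $v_\alpha=g_{\alpha^{-1},\alpha}$ instead of the paper's detour through $(4)$. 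Those pieces check out: the left inverse of $\Delta_{\alpha,\alpha^{-1}}$ in $(1)\Rightarrow(4)$, the cofreeness bijection $u_\alpha\leftrightarrow\phi_{u_\alpha}$, the convolution invertibility of $f_{\alpha,\beta}$ via $T_f$ bijective and $\End^{C_e-}(C_e)\cong C_e^*$, and the identity $f_{\alpha^{-1},\alpha}\circ\lambda_\alpha=f_{\alpha,\alpha^{-1}}$ from \equref{C} are all correct.

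The genuine gap is exactly where you flag ``the delicate point'' and then do not supply an argument: in $(4)\Rightarrow(1)$ you assert that strongness ``thereby forces'' every left $C_e$-colinear map $C_e\to C_\alpha$ to be of the form $c\mapsto c_{(1,\alpha)}v(c_{(2,\alpha^{-1})})$ for some $v\in C_{\alpha^{-1}}^*$. This does not follow formally from the isomorphism $C_e\cong C_\alpha\Box_{C_e}C_{\alpha^{-1}}$; unlike the cofreeness statement you use for maps \emph{into} $C_e$, there is no adjunction giving maps \emph{out of} $C_e$ into an arbitrary comodule, and the claim is false without strongness. Proving it is the real content of the implication: one must set $v=\varepsilon\circ\nabla_{\alpha^{-1},\alpha}\circ(C_{\alpha^{-1}}\ot\psi)\circ\Delta_{\alpha^{-1},e}$ (after checking that $(C_{\alpha^{-1}}\ot\psi)\circ\Delta_{\alpha^{-1},e}$ lands in $C_{\alpha^{-1}}\Box_{C_e}C_\alpha$, which uses left colinearity of $\psi$) and then carry out a cotensor computation to recover $\psi$ from $v$ --- this is precisely what the paper's proof of $(4)\Rightarrow(1)$ does, except that the paper avoids the representability statement altogether and verifies the two identities of \equref{3.2.2} directly for this $v_\alpha$. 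A second, smaller omission: even granting the representability, ``$\phi_\alpha^{-1}\circ\phi_\alpha=C_\alpha$ unwinds to \equref{3.2.2}'' is not immediate for the second identity $v_\alpha(c_{(1,\alpha^{-1})})u_\alpha(c_{(2,\alpha)})=\varepsilon(c)$, because there is no counit on $C_\alpha$ to apply; you get only $c_{(1,\alpha)}\bigl(v_\alpha(c_{(2,\alpha^{-1})})u_\alpha(c_{(3,\alpha)})-\varepsilon(c_{(2,e)})\bigr)=0$ for $c\in C_\alpha$, and extracting the functional identity on $C_e$ needs an extra argument (e.g.\ that the second legs of $\Delta_{\alpha,e}$ span $C_e$, which again uses strongness). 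Both points are repairable, but as written the hardest implication rests on an unproved lemma.
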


\begin{proof}
$\ul {(1)\Rightarrow (2)}$. For $\alpha,\beta\in G$, we define $\lambda_\alpha:\
C_e\to C_e$ and $f_{\alpha,\beta}:\ C_e\to k$ by the formulas
\begin{eqnarray}\eqlabel{3.4.1}
\lambda_\alpha(c)&=&u_\alpha(c_{(1,\alpha)})c_{(2,e)}v_\alpha(c_{(3,\alpha^{-1})}),\\
\eqlabel{3.4.2}
f_{\alpha,\beta}(c)&=&u_\alpha(c_{(1,\alpha)})u_\beta(c_{(2,\beta)})v_{\alpha\beta}(c_{(3,\beta^{-1}\alpha^{-1})}).
\end{eqnarray}
It is easy to verify that the $\lambda_\alpha$ are coalgebra maps, and that $f$ is a factor set.
Clearly the maps $f_{\alpha,\beta}$ are convolution invertible. Now define
$\ul{\varphi}: \ul{C}\to C_e\rtimes k\lan G\ran$ by 
$\varphi_\alpha(c)=c_{(1,e)}u_{\alpha}(c_{(2,\alpha)})\rtimes p_\alpha$.
$\ul{\varphi}$ is an isomorphism of $G$-coalgebras, with inverse given by 
$\varphi_\alpha^{-1}(c\rtimes p_\alpha)= c_{(1,\alpha)}v_\alpha(c_{(2,\alpha^{-1})})$.\\
$\ul {(2)\Rightarrow (3)}$ follows immediately from \prref{3.2}.\\
$\ul {(3)\Rightarrow (4)}$. Let $\ul{C}=C\rtimes k\lan G\ran$ be a crossed coproduct
$G$-coalgebra. It is easy to  see that $\delta_{e,e}=\delta_{e,\alpha}$, and this implies
that $C_\alpha=C\rtimes p_\alpha$ is isomorphic to $C_e=C\rtimes p_e$ as left
$C_e$-comodules. In order to show that $\ul{C}$ is strong, it suffices to show that
$\delta_{\alpha,\alpha^{-1}}$ is monic, for all $\alpha\in G$, see \prref{1.3}
For all $c\in C$, we have
\begin{eqnarray*}
&&\hspace*{-2cm}
(g_{\alpha,\alpha^{-1}}\ot \lambda_{\alpha^{-1}})(\delta_{\alpha,\alpha^{-1}}(c))=
g_{\alpha,\alpha^{-1}}(c_{(1)})( \lambda_{\alpha^{-1}}\circ \lambda_\alpha)(c_{(2)})
f_{\alpha,\alpha^{-1}}(c_{(3)})\\
&\equal{\equref{TC}}&g_{\alpha,\alpha^{-1}}(c_{(1)})f_{\alpha,\alpha^{-1}}(c_{(2)})c_{(3)}=c,
\end{eqnarray*}
where we used the fact that $\lambda_e=C$ ($f$ is normalized).\\
$\ul {(4)\Rightarrow (1)}$. From \thref{1.7}, we know that the corestriction
$\Delta_{\alpha,\beta}:\ C_{\alpha\beta}\to C_\alpha\Box_{C_e} C_\beta$ has an inverse
$\nabla_{\alpha,\beta}$. Let $\phi_\alpha:\ C_\alpha\to C_e$ be a left $C_e$-colinear
isomorphism, for all $\alpha\in G$.
Then we have the following formulas:
\begin{eqnarray}
\Delta_{e,e}(\phi_\alpha(c))&=& c_{(1,e)}\ot \phi_\alpha(c_{(2,\alpha)});\eqlabel{3.4.3}\\
\Delta_{e,\alpha}(\phi^{-1}_\alpha(d))&=&
d_{(1,e)}\ot \phi_\alpha^{-1}(d_{(2,e)})\eqlabel{3.4.4},
\end{eqnarray}
for all $c\in C_\alpha$ and $d\in C_e$. For $c\in C_{\alpha^{-1}}$, we have that
$c_{(1,\alpha^{-1})}\ot \phi_{\alpha}^{-1}(c_{(2,e)})\in C_{\alpha^{-1}}\Box_{C_e}C_\alpha$.
Indeed,
\begin{eqnarray*}
&&\hspace*{-2cm}
c_{(1,\alpha^{-1})}\ot \Delta_{e,\alpha}(\phi_{\alpha}^{-1}(c_{(2,e)}))
\equal{\equref{3.4.4}} c_{(1,\alpha^{-1})}\ot c_{(2,e)}\ot \phi_\alpha^{-1}(c_{(3,e)})\\
&=& \Delta_{\alpha^{-1},e}(c_{(1,\alpha^{-1})})\ot \phi_\alpha^{-1}(c_{(2,e)}).
\end{eqnarray*}
This implies that we have a well-defined map
$$v_\alpha=\varepsilon\circ \nabla_{\alpha^{-1},\alpha}\circ (C_{\alpha^{-1}}\ot\phi_\alpha^{-1})
\circ \Delta_{\alpha^{-1},e}: C_{\alpha^{-1}}\to k.$$
We also consider
$u_\alpha=\varepsilon\circ \phi_\alpha:\ C_\alpha\to k$.
For every $c\in C_e$, we now have that
\begin{eqnarray*}
&&\hspace*{-15mm}
u_\alpha(c_{(1,\alpha)})v_\alpha(c_{(2,\alpha^{-1})})\\
&=&\varepsilon(\phi_\alpha(c_{(1,\alpha)}))(\varepsilon\circ \nabla_{\alpha^{-1},\alpha})
(c_{(2,\alpha^{-1})}\ot \phi_\alpha^{-1}(c_{(3,e)}))\\
&\equal{\equref{3.4.4}}&
\varepsilon(\phi_\alpha(\phi_\alpha^{-1}(c)_{(1,\alpha)}))
(\varepsilon\circ \nabla_{\alpha^{-1},\alpha})
(\phi_\alpha^{-1}(c)_{(2,\alpha^{-1})}\ot \phi_\alpha^{-1}(c)_{(3,\alpha)})\\
&=&\varepsilon(\phi_\alpha(\phi_\alpha^{-1}(c)_{(1,\alpha)}))
(\varepsilon\circ \nabla_{\alpha^{-1},\alpha}\circ \Delta_{\alpha^{-1},\alpha})
(\phi_\alpha^{-1}(c)_{(2,e)})\\
&=& \varepsilon(\phi_\alpha(\phi_\alpha^{-1}(c)_{(1,\alpha)}))
\varepsilon(\phi_\alpha^{-1}(c)_{(2,e)})=\varepsilon(\phi_\alpha(\phi_\alpha^{-1}(c)))
=\varepsilon(c);\\
&&\hspace*{-15mm}
v_\alpha(c_{(1,\alpha^{-1})})u_\alpha(c_{(2,\alpha)})\\
&=&
(\varepsilon\circ\nabla_{\alpha^{-1},\alpha})(c_{(1,\alpha^{-1})} \ot \phi_\alpha^{-1}(c_{(2,e)}))
(\varepsilon\circ \phi_\alpha)(c_{(3,\alpha)})\\
&\equal{\equref{3.4.3}}&
(\varepsilon\circ\nabla_{\alpha^{-1},\alpha})(c_{(1,\alpha^{-1})}
\ot\phi_\alpha^{-1}(\phi_\alpha(c_{(2,\alpha)})_{(1,e)})
\varepsilon(\phi_\alpha(c_{(2,\alpha)})_{(2,e)}))\\
&=&
(\varepsilon\circ\nabla_{\alpha^{-1},\alpha})(c_{(1,\alpha^{-1})}
\ot\phi_\alpha^{-1}(\phi_\alpha(c_{(2,\alpha)})))\\
&=&
(\varepsilon\circ\nabla_{\alpha^{-1},\alpha}\circ\Delta_{\alpha^{-1},\alpha})(c)=
\varepsilon(c),
\end{eqnarray*}
as needed.
\end{proof}

If there exists a morphism of $G$-coalgebras $\ul{u}:\ \ul{C}\to k\lan G\ran$, then $\ul{C}$ is cocleft.
We have the following characterization of this situation.

\begin{theorem}\thlabel{3.5}
For a $G$-coalgebra $\ul{C}$, the following assertions are equivalent:
\begin{enumerate}
\item $\ul{C}$ is isomorphic to a smash coproduct $G$-coalgebra;
\item there exists a morphism of $G$-coalgebras $\ul{u}:\ \ul{C}\to k\lan G\ran$.
\end{enumerate}
\end{theorem}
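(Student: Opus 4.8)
The plan is to reduce both implications to machinery already in place, namely \thref{3.4} and the explicit formula for the comultiplication of a smash coproduct. We prove $(1)\Rightarrow(2)$ by transport of structure along the given isomorphism together with one short direct verification, and $(2)\Rightarrow(1)$ by feeding $\ul u$ into the construction from the proof of \thref{3.4} and then checking that the factor set it produces is trivial.

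For $(1)\Rightarrow(2)$: morphisms of $G$-coalgebras compose, and precomposition with an isomorphism of $G$-coalgebras keeps the target $k\lan G\ran$, so it suffices to exhibit a morphism of $G$-coalgebras out of a smash coproduct $C\rtimes k\lan G\ran$ itself. I will take $\ul u=(u_\alpha)_{\alpha\in G}$ with $u_\alpha\in(C\rtimes p_\alpha)^*$ defined by $u_\alpha(c\rtimes p_\alpha)=\varepsilon_C(c)$. The counit axiom is immediate, because for a smash coproduct the counit is $c\rtimes p_e\mapsto\varepsilon_C(c)$. For comultiplicativity one inserts $\Delta_{\alpha,\beta}(c\rtimes p_{\alpha\beta})=(c_{(1)}\rtimes p_\alpha)\ot(\lambda_\alpha(c_{(2)})\rtimes p_\beta)$ and applies $u_\alpha\ot u_\beta$, obtaining $\varepsilon_C(c_{(1)})\varepsilon_C(\lambda_\alpha(c_{(2)}))\,p_\alpha\ot p_\beta=\varepsilon_C(c)\,p_\alpha\ot p_\beta$, which is exactly $\Delta^{k\lan G\ran}_{\alpha,\beta}$ of $u_{\alpha\beta}(c\rtimes p_{\alpha\beta})p_{\alpha\beta}$; here we use that each $\lambda_\alpha$ is a coalgebra map, so $\varepsilon_C\circ\lambda_\alpha=\varepsilon_C$. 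This direction is routine.

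For $(2)\Rightarrow(1)$: let $\ul u:\ul C\to k\lan G\ran$, $\ul u=\{u_\alpha\in C_\alpha^*\}$, be a morphism of $G$-coalgebras. By the remark preceding \deref{3.3}, $\ul u$ is convolution invertible with $v_\alpha=u_{\alpha^{-1}}$, so $\ul C$ is cocleft, and the construction in the proof of $(1)\Rightarrow(2)$ of \thref{3.4} applied with this choice of inverse gives an isomorphism of $G$-coalgebras $\ul C\cong C_e\rtimes_f k\lan G\ran$ where, by \equref{3.4.2}, $f_{\alpha,\beta}(c)=u_\alpha(c_{(1,\alpha)})u_\beta(c_{(2,\beta)})u_{(\alpha\beta)^{-1}}(c_{(3,(\alpha\beta)^{-1})})$ for $c\in C_e$. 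It remains to show $f_{\alpha,\beta}=\varepsilon_{C_e}$, so that $C_e\rtimes_f k\lan G\ran$ is a smash coproduct $G$-coalgebra. The crucial observation is that a morphism of $G$-coalgebras into $k\lan G\ran$ is automatically ``multiplicative'': comparing the coefficients of $p_\alpha\ot p_\beta$ in the equation $(u_\alpha\ot u_\beta)\circ\Delta^{\ul C}_{\alpha,\beta}=\Delta^{k\lan G\ran}_{\alpha,\beta}\circ u_{\alpha\beta}$ yields $u_\alpha(c_{(1,\alpha)})u_\beta(c_{(2,\beta)})=u_{\alpha\beta}(c)$ for all $c\in C_{\alpha\beta}$, and the counit axiom yields $u_e=\varepsilon$. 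Setting $\gamma=\alpha\beta$ and using coassociativity of $\ul C$ to realize $c_{(1,\alpha)}\ot c_{(2,\beta)}\ot c_{(3,\gamma^{-1})}$ as $(\Delta_{\alpha,\beta}\ot C_{\gamma^{-1}})\circ\Delta_{\gamma,\gamma^{-1}}$ applied to $c$, the multiplicativity relation collapses $u_\alpha*u_\beta$ to $u_\gamma$ and then $u_\gamma*u_{\gamma^{-1}}$ to $u_e=\varepsilon$ (equivalently, invoke \equref{3.2.2} with $v_\gamma=u_{\gamma^{-1}}$); hence $f_{\alpha,\beta}(c)=\varepsilon(c)$. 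Therefore $\ul C$ is isomorphic to the smash coproduct $C_e\rtimes_f k\lan G\ran$.

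The step I expect to be the main obstacle is the last computation: one must apply coassociativity of the $G$-coalgebra $\ul C$ and reindex the Sweedler legs consistently before the multiplicativity relation can be telescoped down to $u_e=\varepsilon$. Everything else — the verification in $(1)\Rightarrow(2)$ and the extraction of the multiplicativity relation — is formal.
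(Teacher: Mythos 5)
Your proposal is correct and follows essentially the same route as the paper: for $(1)\Rightarrow(2)$ the same map $u_\alpha(c\rtimes p_\alpha)=\varepsilon_C(c)p_\alpha$ with the same verification, and for $(2)\Rightarrow(1)$ the same reduction to cocleftness with $v_\alpha=u_{\alpha^{-1}}$, followed by the computation that the factor set \equref{3.4.2} telescopes to $u_e=\varepsilon$ via the comultiplicativity of $\ul{u}$. The only difference is that you spell out the telescoping step that the paper leaves implicit.
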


\begin{proof}
$\ul {(1)\Rightarrow (2)}$. Let $\ul{C}=C\rtimes k\lan G\ran$ be a smash coproduct
$G$-coalgebra. Then $\delta_{\alpha,\beta}(c)=c_{(1)}\ot \lambda_\alpha(c_{(2)})$,
for all $c\in C$. The map $\ul{u}:\ C\rtimes k\lan G\ran\to k\lan G\ran$, 
$u_\alpha(c\rtimes p_\alpha)=\varepsilon_C(c)p_\alpha$,
is a morphism of $G$-coalgebras since
\begin{eqnarray*}
&&\hspace*{-2cm}
((u_\alpha\ot u_\beta)\circ\Delta_{\alpha,\beta})(c\rtimes p_{\alpha\beta})
= \varepsilon_C(c_{(1)})p_\alpha \ot \varepsilon_C(\lambda_\alpha(c_{(2)}))p_\beta\\
&=&\varepsilon_C(c)p_{\alpha}\ot p_{\beta}=(\Delta_{\alpha,\beta}
\circ u_{\alpha\beta}(c\rtimes p_{\alpha\beta});\\
&&\hspace*{-2cm}
\varepsilon(c\rtimes p_e)=\varepsilon_C(c)=\varepsilon(u_e(c\rtimes p_e)).
\end{eqnarray*}
$\ul {(2)\Rightarrow (1)}$. As we have already mentioned, $\ul{C}$ is cocleft. 
The convolution inverse of $u_\alpha$ is just $v_\alpha=u_{\alpha^{-1}}$. In 
the proof of $\ul {(1)\Rightarrow (2)}$ in \thref{3.4}, we have seen that
$\ul{C}\cong C_e\rtimes_f k\lan G\ran$, with
$$f_{\alpha,\beta}(c)=u_\alpha(c_{(1,\alpha)})u_\beta(c_{(2,\beta)})
u_{\beta^{-1}\alpha^{-1}}(c_{(3,\beta^{-1}\alpha^{-1})})=u_e(c)=\varepsilon(c).$$
\end{proof}

In \thref{3.5}, we have characterized crossed coproduct $G$-coalgebras with trivial
factor set. We can also characterize when the maps $\lambda_\alpha$ are the
identity maps.

\begin{theorem}\thlabel{3.6}
For a $G$-coalgebra $\ul{C}$, the following conditions are equivalent.
\begin{enumerate}
\item There exists a convolution invertible $\ul{u}:\ \ul{C}\to k\lan G\ran$ such that
\begin{equation}\eqlabel{3.6.1}
u_\alpha(c_{(1,\alpha)})c_{(2,e)}=c_{(1,e)}u_\alpha(c_{(2,\alpha)}),
\end{equation}
for all $\alpha\in G$ and $c\in C_\alpha$;
\item $\ul{C}$ is isomorphic to a crossed coproduct $G$-coalgebra $C\rtimes_f k\lan G\ran$,
with $\lambda_\alpha=C$, for all $\alpha\in G$;
\item $\ul{C}$ is isomorphic to a crossed coproduct $G$-coalgebra $C\rtimes_f k\lan G\ran$,
with $f$ normalized and with $\lambda_\alpha=C$, for all $\alpha\in G$;
\item $\ul{C}$ is a strong $G$-coalgebra, and every $C_\alpha$ is isomorphic to $C_e$
as a $C_e$-bicomodule.
\end{enumerate}
\end{theorem}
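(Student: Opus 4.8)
The structure of the proof of \thref{3.6} should parallel that of \thref{3.4} closely, since condition \equref{3.6.1} is exactly the assertion that the maps $\lambda_\alpha$ coming from $\ul u$ are the identity; so most of the work is tracking where that hypothesis simplifies the corresponding arguments for \thref{3.4}. The cyclic implication $(1)\Rightarrow(2)\Rightarrow(3)\Rightarrow(4)\Rightarrow(1)$ is the natural route.

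For $(1)\Rightarrow(2)$: given a convolution invertible $\ul u:\ul C\to k\lan G\ran$ satisfying \equref{3.6.1}, I would reuse the formulas \equref{3.4.1} and \equref{3.4.2} to define $\lambda_\alpha$ and the factor set $f_{\alpha,\beta}$ on $C_e$, and the same isomorphism $\ul\varphi:\ul C\to C_e\rtimes_f k\lan G\ran$, $\varphi_\alpha(c)=c_{(1,e)}u_\alpha(c_{(2,\alpha)})\rtimes p_\alpha$ with inverse $\varphi_\alpha^{-1}(c\rtimes p_\alpha)=c_{(1,\alpha)}v_\alpha(c_{(2,\alpha^{-1})})$, exactly as in the proof of \thref{3.4}. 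The only new point is to check that \equref{3.6.1} forces $\lambda_\alpha=C$: substituting \equref{3.6.1} into \equref{3.4.1} gives $\lambda_\alpha(c)=c_{(1,e)}u_\alpha(c_{(2,\alpha)})v_\alpha(c_{(3,\alpha^{-1})})$, and then the convolution-invertibility relation \equref{3.2.2} collapses $u_\alpha\ast v_\alpha$ to $\varepsilon$, leaving $\lambda_\alpha(c)=c$. The implication $(2)\Rightarrow(3)$ is immediate from \prref{3.2}, once one observes that the normalization procedure there does not disturb the condition $\lambda_\alpha=C$ for $\alpha\neq e$ (it only replaces $\lambda_e$ by $C$, which is already the case), and leaves $\lambda'_\alpha=\lambda_\alpha$ for $\alpha\neq e$; this needs just one sentence of comment.

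For $(3)\Rightarrow(4)$: this is the analogue of $(3)\Rightarrow(4)$ in \thref{3.4}. With $\lambda_\alpha=C$ for all $\alpha$, the comultiplication of $C\rtimes_f k\lan G\ran$ becomes $\delta_{\alpha,\beta}(c)=c_{(1)}\ot f_{\alpha,\beta}(c_{(3)})c_{(2)}$, which is manifestly left \emph{and} right $C_e$-colinear as a map $C_\alpha\to C_e$, in both the source-degree-$e$ and source-degree-$\alpha\beta$ slots, so each $C_\alpha=C\rtimes p_\alpha$ is isomorphic to $C_e$ as a $C_e$-bicomodule via $c\mapsto c$. Strongness follows exactly as in \thref{3.4}: one shows $\delta_{\alpha,\alpha^{-1}}$ is monic by exhibiting the retraction $(g_{\alpha,\alpha^{-1}}\ot C_e)\circ\delta_{\alpha,\alpha^{-1}}=C$, now even simpler because $\lambda_{\alpha^{-1}}=C$ makes the $\lambda_{\alpha^{-1}}\circ\lambda_\alpha$ term disappear and one uses \equref{TC} (trivially) together with convolution-invertibility of $f_{\alpha,\alpha^{-1}}$; then invoke \prref{1.3}.

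For $(4)\Rightarrow(1)$: this is the part that needs the most care, mirroring $(4)\Rightarrow(1)$ of \thref{3.4} but with \emph{bicomodule} isomorphisms $\phi_\alpha:C_\alpha\to C_e$ in place of merely left-colinear ones. Using \thref{1.7} we get the inverse $\nabla_{\alpha,\beta}$ of the corestriction $\Delta_{\alpha,\beta}:C_{\alpha\beta}\to C_\alpha\Box_{C_e}C_\beta$, and we define $u_\alpha=\varepsilon\circ\phi_\alpha$ and $v_\alpha=\varepsilon\circ\nabla_{\alpha^{-1},\alpha}\circ(C_{\alpha^{-1}}\ot\phi_\alpha^{-1})\circ\Delta_{\alpha^{-1},e}$ exactly as before; the verification that $v_\alpha$ is a two-sided convolution inverse of $u_\alpha$ is verbatim the computation in \thref{3.4} and uses only left-colinearity of $\phi_\alpha$ (formulas \equref{3.4.3}–\equref{3.4.4}). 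The genuinely new task is to verify \equref{3.6.1}, i.e. $u_\alpha(c_{(1,\alpha)})c_{(2,e)}=c_{(1,e)}u_\alpha(c_{(2,\alpha)})$ for $c\in C_\alpha$; this is where the \emph{right} $C_e$-colinearity of $\phi_\alpha$ enters, via the identity $\Delta_{\alpha,e}(\phi_\alpha^{-1}(d))\circ\phi_\alpha = \cdots$, or more directly: right colinearity of $\phi_\alpha$ says $\Delta_{e,e}(\phi_\alpha(c))$ — no, rather the right-hand coaction — $(\phi_\alpha\ot C_e)\circ\Delta_{\alpha,e}=\Delta_{e,e}\circ\phi_\alpha$ on $C_\alpha$; applying $\varepsilon$ to the appropriate tensor leg of both sides of this, against the left-colinearity formula \equref{3.4.3}, yields precisely \equref{3.6.1}. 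The main obstacle is thus bookkeeping: making sure one uses the right-hand $C_e$-coaction on $C_\alpha$ (which is $\Delta_{\alpha,e}:C_\alpha\to C_\alpha\ot C_e$) consistently and deriving the commutation \equref{3.6.1} cleanly from it; everything else is a transcription of the proof of \thref{3.4}.
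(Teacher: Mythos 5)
Your proposal is correct and follows essentially the same route as the paper: the same cycle $(1)\Rightarrow(2)\Rightarrow(3)\Rightarrow(4)\Rightarrow(1)$, with each step obtained by specializing the corresponding step of \thref{3.4} (and \prref{3.2}) to the case $\lambda_\alpha=C$, and with \equref{3.6.1} derived in $(4)\Rightarrow(1)$ by applying $\varepsilon$ to the left- and right-colinearity identities for $\phi_\alpha$, exactly as in the paper.
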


\begin{proof}
$\ul {(1)\Rightarrow (2)}$. It follows from \equref{3.6.1} that the maps $\lambda_\alpha$
constructed in the proof of $\ul {(1)\Rightarrow (2)}$ in \thref{3.4} are
equal to the identity map on $C_e$.\\
$\ul {(2)\Rightarrow (3)}$. If the maps $\lambda_\alpha$ are equal to $C$ in the proof of
\prref{3.2}, then we also have that $\lambda'_\alpha=C$, for all $\alpha\in G$.\\
$\ul {(3)\Rightarrow (4)}$. In the proof of $\ul {(3)\Rightarrow (4)}$ in \thref{3.4}, it is shown
that $\ul{C}$ is strong, and left $C_e$-colinear maps $C_\alpha\to C_e$ are given;
using the fact that $\lambda_\alpha=C$, we can easily show that these maps are also
right $C_e$-colinear.\\
$\ul {(4)\Rightarrow (1)}$. We are in the situation of the proof of $\ul {(4)\Rightarrow (1)}$
in \thref{3.4}, with the additional hypothesis that $\phi_\alpha$ is right $C_e$-colinear,
that is,
$$\Delta_{e,e}(\phi_\alpha(c))=\phi_\alpha(c_{(1,\alpha)})\ot c_{(2,e)}.$$
Applying $\varepsilon$ to the first tensor factor, we find that $\phi_\alpha(c)=
u_\alpha(c_{(1,\alpha)})c_{(2,e)}$. If we apply $\varepsilon$ to the second tensor factor
of \equref{3.4.3}, then we obtain that $\phi_\alpha(c)=c_{(1,e)}u_\alpha(c_{(2,\alpha)})$,
and \equref{3.6.1} follows.
\end{proof}

\section{Cohomology}\selabel{4}
Let $C$ be a cocommutative coalgebra, $\lambda$ a collection of coalgebra
endomorphisms of $C$ and $f$ a factor set (see \prref{3.1}). Then \equref{3.1.1}
is equivalent to $\lambda_e=C$, and \equref{TC} is equivalent to
$\lambda_{\alpha\beta}=\lambda_\beta\circ\lambda_\alpha$. This means that
$C$ is a right $G$-module coalgebra, with right $G$-action $c\cdot \alpha=
\lambda_\alpha(c)$. Then $C^*$ is a left $G$-module algebra, with left $G$-action
$\lan \alpha\cdot f,c\ran=\lan f,c\cdot \alpha\ran$.
The abelian group $\GG_m(C^*)$ consisting of convolution invertible elements 
of $C^*$ is a left $G$-module, and we can consider the cohomology groups
$H^n(G,\GG_m(C^*))$. Condition \equref{C} can be rewritten as
$(\alpha\cdot f_{\beta,\gamma})*f_{\alpha,\beta\gamma}=f_{\alpha,\beta}*
f_{\alpha\beta,\gamma}$, 
which means precisely that $f\in Z^2(G,\GG_m(C^*))$ is a 2-cocycle. \equref{CU}
follows from \equref{C}, after we 
successively take $\alpha=\beta=e$ and $\beta=\gamma=e$ in \equref{C}.

\begin{lemma}\lelabel{4.1}
Let $C$ be a cocommutative right $G$-module coalgebra, and let $f\in Z^2(G,\GG_m(C^*))$. 
If $f'$ is the
cocycle obtained from $f$ using the construction in \prref{3.2}, then $f^{-1}*f'\in B^2(G,\GG_m(C^*))$.
\end{lemma}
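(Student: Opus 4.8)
The plan is to realize $f^{-1}*f'$ as an explicit coboundary. First I would note that, since $C$ is cocommutative, the convolution algebra $C^*$ is commutative, so $\GG_m(C^*)$ is an abelian $G$-module and the coboundary of a $1$-cochain $t=(t_\alpha)_{\alpha\in G}\in C^1(G,\GG_m(C^*))$ is $(\partial t)_{\alpha,\beta}=(\alpha\cdot t_\beta)*t_\alpha*t_{\alpha\beta}^{-1}$, where $t_{\alpha\beta}^{-1}$ denotes the convolution inverse of $t_{\alpha\beta}$ and, as in \seref{4}, $\alpha\cdot g=g\circ\lambda_\alpha$ for $g\in C^*$. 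The candidate cochain is the one supported in degree $e$, namely $t_e=g_{e,e}$ and $t_\alpha=\varepsilon_C$ for $\alpha\neq e$. Since $g_{e,e}$ (the convolution inverse of $f_{e,e}$) and $\varepsilon_C$ both lie in $\GG_m(C^*)$, this is a genuine $1$-cochain, and it suffices to check that $f'_{\alpha,\beta}=f_{\alpha,\beta}*(\partial t)_{\alpha,\beta}$ for all $\alpha,\beta\in G$.

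The verification is a short case analysis on which of $\alpha$, $\beta$, $\alpha\beta$ equals $e$, using the explicit description of $f'$ recorded at the end of the proof of \prref{3.2}: $f'_{\alpha,\beta}=\varepsilon_C$ whenever $\alpha=e$ or $\beta=e$; $f'_{\alpha,\alpha^{-1}}=f_{\alpha,\alpha^{-1}}*f_{e,e}$ for $\alpha\neq e$; and $f'_{\alpha,\beta}=f_{\alpha,\beta}$ otherwise. When $\alpha,\beta,\alpha\beta\neq e$, all $t$-factors equal $\varepsilon_C$ and $\alpha\cdot\varepsilon_C=\varepsilon_C$, so $(\partial t)_{\alpha,\beta}=\varepsilon_C$, matching $f'_{\alpha,\beta}=f_{\alpha,\beta}$. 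When $\beta=\alpha^{-1}\neq e$, the only nontrivial factor is $t_e^{-1}=f_{e,e}$, so $(\partial t)_{\alpha,\alpha^{-1}}=f_{e,e}$, matching $f'_{\alpha,\alpha^{-1}}=f_{\alpha,\alpha^{-1}}*f_{e,e}$. When $\beta=e$ and $\alpha\neq e$, we get $(\partial t)_{\alpha,e}=\alpha\cdot g_{e,e}=g_{e,e}\circ\lambda_\alpha$, which is the convolution inverse of $f_{e,e}\circ\lambda_\alpha=f_{\alpha,e}$ by \equref{CU}, whence $f_{\alpha,e}*(\partial t)_{\alpha,e}=\varepsilon_C=f'_{\alpha,e}$. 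The remaining cases $\alpha=e,\beta\neq e$ and $\alpha=\beta=e$ are analogous and use only \equref{CU}, the identity $\lambda_e=C$, and $t_e*t_e^{-1}=\varepsilon_C$.

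The one place where anything other than substitution is needed is the bookkeeping identity that $\alpha\cdot g_{e,e}=g_{e,e}\circ\lambda_\alpha$ is the convolution inverse of $f_{\alpha,e}=f_{e,e}\circ\lambda_\alpha$; this is exactly where one uses that each $\lambda_\alpha$ is a coalgebra endomorphism of $C$, so that precomposition with $\lambda_\alpha$ is a homomorphism of convolution algebras $C^*\to C^*$ and hence sends inverses to inverses and $\varepsilon_C$ to $\varepsilon_C$. I would therefore present in full only the generic case, the case $\beta=\alpha^{-1}$, and the case $\beta=e$, leaving the rest to the reader. I would also add a sentence noting that the argument is insensitive to the sign convention chosen for $\partial$: with the opposite convention one simply takes $t_e=f_{e,e}$ instead, and since $B^2(G,\GG_m(C^*))$ is a subgroup this changes nothing.
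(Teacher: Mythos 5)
Your proposal is correct and is essentially the paper's own argument: the paper uses the $1$-cochain $h$ with $h_e=f_{e,e}$ and $h_\alpha=\varepsilon_C$ otherwise and verifies $f=f'*\delta_1(h)$ by the same case analysis on which of $\alpha,\beta,\alpha\beta$ equals $e$, which is exactly your computation with $t=h^{-1}$. The only differences are cosmetic (which of the two mutually inverse coboundaries you exhibit, and how explicitly the role of \equref{CU} and the coalgebra-map property of $\lambda_\alpha$ is spelled out).
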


\begin{proof}
For every $\alpha\in G$, we define $h_\alpha\in C^*$ as follows $h_\alpha=\varepsilon_C$
if $\alpha\neq e$ and $h_e=f_{e,e}$. We can easily compute
$\delta_1(h)_{\alpha,\beta}=h_\alpha*h^{-1}_{\alpha\beta}*(\alpha\cdot h_\beta)$.
We find that $\delta_1(h)_{e,\alpha}=f_{e,\alpha}$ and $\delta_1(h)_{\alpha,e}=
f_{\alpha,e}$, for all $\alpha\in G$, $\delta_1(h)_{\alpha,\alpha^{-1}}=g_{e,e}$,
for all $\alpha\neq e$ and $\delta_1(h)_{\alpha,\beta}=\varepsilon_C$, if $\alpha\neq
e$, $\beta\neq e$ and $\alpha\beta\neq e$. Looking at the explicit formula of $f'$ in
the proof of \prref{3.2}, we see that $f_{\alpha,\beta}=f'_{\alpha,\beta}*
\delta_1(h)_{\alpha,\beta}$.
\end{proof}

\begin{proposition}\prlabel{4.2}
Let $C$ be a cocommutative right $G$-module coalgebra, and take $f,f'\in Z^2(G,\GG_m(C^*))$.
The crossed coproduct $G$-coalgebra $C\rtimes_{f} k\lan G\ran$ and
$C\rtimes_{f'} k\lan G\ran$ are isomorphic if and only if there exists a coalgebra automorphism
$\varphi$ of $C$ such that $[f]=[f'\circ\varphi]$ in $H^2(G,\GG_m(C^*))$.
\end{proposition}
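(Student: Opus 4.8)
The plan is to establish the equivalence by unwinding what an isomorphism of crossed coproduct $G$-coalgebras means at the level of the factor sets, using the translation to group cohomology already set up in this section. Recall that for a cocommutative right $G$-module coalgebra $C$, a factor set $f$ is precisely a 2-cocycle $f\in Z^2(G,\GG_m(C^*))$, and that $C\rtimes_f k\lan G\ran$ has underlying family $(C\rtimes p_\alpha)_{\alpha\in G}$ with $C\rtimes p_\alpha\cong C$ as a vector space.

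For the \emph{if} direction, suppose $\varphi$ is a coalgebra automorphism of $C$ with $[f]=[f'\circ\varphi]$ in $H^2(G,\GG_m(C^*))$, where $(f'\circ\varphi)_{\alpha,\beta}=f'_{\alpha,\beta}\circ\varphi$. First I would treat the case $f=f'\circ\varphi$ exactly: here one checks directly that the collection $\ul{\psi}=(\psi_\alpha)_{\alpha\in G}$ with $\psi_\alpha(c\rtimes p_\alpha)=\varphi(c)\rtimes p_\alpha$ is an isomorphism of $G$-coalgebras $C\rtimes_f k\lan G\ran\to C\rtimes_{f'} k\lan G\ran$, using cocommutativity of $C$ and the fact that $\varphi$ intertwines the $G$-action (which one gets for free, or imposes, since $\lambda_\alpha$ and $\lambda'_\alpha$ must correspond). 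Then I would reduce the case $[f]=[f'\circ\varphi]$ to the case of equality: if $f*(\delta_1(h))^{-1}=f'\circ\varphi$ for some $h\in C^{*G}$-valued $1$-cochain, then composing the isomorphism $\ul{\psi}$ above with a diagonal isomorphism built from $h$ (of exactly the type appearing in \prref{3.2} and \leref{4.1}, which changes $f$ within its cohomology class) yields the desired isomorphism. This is where \leref{4.1} is the relevant mechanism: coboundaries correspond precisely to diagonal $G$-coalgebra isomorphisms fixing each $C\rtimes p_\alpha$ as a vector space.

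For the \emph{only if} direction, suppose $\ul{\Phi}:C\rtimes_f k\lan G\ran\to C\rtimes_{f'} k\lan G\ran$ is an isomorphism of $G$-coalgebras. Each component $\Phi_\alpha:C\rtimes p_\alpha\to C\rtimes p_\alpha$ is a linear isomorphism of $C$. Using \prref{3.2} I may assume both $f$ and $f'$ are normalized, so that $\delta_{e,e}=\delta'_{e,e}=\Delta_C$ and $\varepsilon(c\rtimes p_e)=\varepsilon_C(c)$; then the condition that $\Phi_e$ respects $\Delta_{e,e}$ and $\varepsilon$ forces $\Phi_e=\varphi$ to be a coalgebra automorphism of $C$. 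Next, the compatibility of $\ul{\Phi}$ with the maps $\Delta_{\alpha,\beta}$ — evaluated on suitable components using cocommutativity and the explicit formula $\delta_{\alpha,\beta}(c)=c_{(1)}\ot\lambda_\alpha(c_{(2)})f_{\alpha,\beta}(c_{(3)})$ — will express each $\Phi_\alpha$ as $\Phi_\alpha(c\rtimes p_\alpha)=\varphi(c_{(1)})h_\alpha(c_{(2)})\rtimes p_\alpha$ for some convolution invertible $h_\alpha\in C^*$, and will produce the relation $f_{\alpha,\beta}=(f'_{\alpha,\beta}\circ\varphi)*\delta_1(h)_{\alpha,\beta}$, i.e. $[f]=[f'\circ\varphi]$ in $H^2(G,\GG_m(C^*))$. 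The counit normalization gives $h_e=\varepsilon_C$, so $h$ is a genuine $1$-cochain.

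The main obstacle I anticipate is the bookkeeping in the \emph{only if} direction: extracting the correct form of $\Phi_\alpha$ from the $G$-coalgebra morphism axioms and verifying that $f'\circ\varphi$ really is a $2$-cocycle (so that $[f'\circ\varphi]$ makes sense), which requires knowing that $\varphi$ is compatible with the $G$-module structure on $C$, or at least on $C^*$, up to the ambiguity absorbed by $h$. One must be careful that the $G$-actions $\lambda_\alpha$ on the two crossed coproducts need not literally agree — only up to the inner-type twist by $h_\alpha$ — so the clean statement involves $f'\circ\varphi$ rather than $f'$ itself, and checking that $\varphi$ conjugates the $G$-action appropriately is the delicate point. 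Everything else is a routine, if lengthy, Sweedler-notation computation relying on cocommutativity of $C$ and the cocycle identity \equref{C}.
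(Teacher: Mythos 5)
Your proposal follows essentially the same route as the paper's proof: reduce to normalized cocycles via \prref{3.2} and \leref{4.1}, identify $\varphi=\Phi_e$ as a coalgebra automorphism, show each component has the form $\Phi_\alpha(c\rtimes p_\alpha)=\varphi(c_{(1)})h_\alpha(c_{(2)})\rtimes p_\alpha$ for a convolution invertible $1$-cochain $h$, and read off $f=(f'\circ\varphi)*\delta_1(h)$; the converse builds $\Phi_\alpha$ from $\varphi$ and $h$ by the same formula (your two-step factorization into ``$\varphi$ diagonally'' followed by an ``$h$-twist'' is just a cosmetic repackaging of this). The subtleties you flag --- whether $f'\circ\varphi$ is a cocycle and how $\varphi$ interacts with the $G$-action --- are real but are handled (or glossed over) identically in the paper, so the approaches coincide.
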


\begin{proof}
It follows from \prref{3.2} and \leref{4.1} that we may restrict attention to the situation where
$f$ and $f'$ are normalized cocycles. First assume that $\{\phi_\alpha~|~
\alpha\in G\}$ is a family of $k$-linear maps $C\to C$ such that
$(\phi_\alpha\rtimes p_\alpha)_{\alpha\in G}$ is an isomorphism between the
$G$-coalgebras $C\rtimes_{f} k\lan G\ran$ and $C\rtimes_{f'} k\lan G\ran$.
Then we have for al $\alpha,\beta\in G$ and $c\in C$ that
\begin{eqnarray}
\phi_\alpha(c_{(1)})\ot \phi_\beta(c_{(2)}\cdot\alpha)f_{\alpha,\beta}(c_{(3)})\nonumber\\
&=&
\phi_{\alpha\beta}(c)_{(1)}\ot (\phi_{\alpha\beta}(c)_{(2)}\cdot \alpha)
f'_{\alpha,\beta}(\phi_{\alpha\beta}(c)_{(3)}),\eqlabel{4.2.1}
\end{eqnarray}
and
$
\varepsilon_C=\varepsilon_C\circ \phi_e$,
where we used the fact that $f$ and $f'$ are normalized, so that
$\varepsilon_C=g_{e,e}=g'_{e,e}$. In particular, $\phi_e$ is a coalgebra automorphism of $C$.
Consider the maps $h_\alpha=\varepsilon_C\circ \phi_\alpha\in C^*$.
Take $\alpha=e$ in \equref{4.2.1} and apply
$\varepsilon_C$ to the second tensor factor. Again using the normality of $f$ and $f'$,
we find that
\begin{equation}\eqlabel{4.2.3}
\phi_e(c_{(1)})h_\beta(c_{(2)})=\phi_\beta(c).
\end{equation}
It is then easy to show that $h_\alpha^{-1}=\varepsilon_C\circ\phi_\alpha^{-1}\circ \phi_e$
is the convolution inverse of $h_\alpha$. Now apply $\varepsilon_C\ot\varepsilon_C$ to
\equref{4.2.1}, to obtain that
$$h_\alpha(c_{(1)})(\alpha\cdot h_\beta)(c_{(2)})f_{\alpha,\beta}(c_{(3)})=
f'_{\alpha,\beta}(\phi_{\alpha\beta}(c))\equal{\equref{4.2.3}}f'_{\alpha,\beta}(\phi_e(c_{(1)})
h_{\alpha\beta}(c_{(2)})),$$
or 
\begin{equation}\eqlabel{4.2.4}
f_{\alpha,\beta}*h_\alpha*(\alpha\cdot h_\beta)*h^{-1}_{\alpha\beta}=f'_{\alpha,\beta}
\circ\phi_e,
\end{equation}
Conversely, assume that $[f]=[f'\circ\varphi]$, that is, there exists a family of convolution
invertible maps $\{h_\alpha~|~\alpha\in G\}\subset C^*$ such that \equref{4.2.4} holds (with $\phi_e=\varphi$). Then we define $\phi_\alpha$ by \equref{4.2.3}. Straightforward computations show that the
$\phi_\alpha$ define an isomorphism of $G$-coalgebras.
\end{proof}

Now let $\ul{C}$ be a cocleft $G$-coalgebra, and assume that $C_e$ is cocommutative.
Let $u_\alpha:\ C_\alpha\to k$ and
$v_\alpha:\ C_{\alpha^{-1}}\to k$ be $k$-linear maps satisfying \equref{3.2.2}.
Recall from the proof of $\ul{(1)\Rightarrow (2)}$ in \thref{3.4} that we have
maps $\lambda_\alpha:\ C_e\to C_e$ and $f_{\alpha,\beta}:\ C_e\to k$ defined by
\equref{3.4.1} and \equref{3.4.2}, and that $\ul{C}\cong C\rtimes_f k\lan G\ran$.
We have seen at the beginning of this Section that $C_e$ is a right $G$-module coalgebra,
with right $G$-action 
$c\cdot \alpha=\lambda_\alpha(c)$.

\begin{lemma}\lelabel{4.3}
With notation as above, we have:
\begin{enumerate}
\item the maps $\lambda_\alpha$ are independent of the choice of $\ul{u}$;
\item for all $\alpha\in G$ and $c\in C_\alpha$, we have
$c_{(1,\alpha)}\ot c_{(2,e)}=c_{(2,\alpha)}\ot c_{(1,e)}\cdot \alpha$.
\end{enumerate}
\end{lemma}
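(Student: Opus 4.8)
The plan is to prove (2) first, by transporting it along an isomorphism of $\ul{C}$ with a crossed coproduct, and then to deduce (1).

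First I would fix a choice of $\ul{u}$ (with convolution inverse $\ul{v}$) and the associated maps $\lambda_\alpha,f$ of \equref{3.4.1} and \equref{3.4.2}. By the proof of $\ul{(1)\Rightarrow(2)}$ in \thref{3.4}, $\ul{\varphi}:\ul{C}\to C_e\rtimes_f k\lan G\ran$, $\varphi_\alpha(c)=c_{(1,e)}u_\alpha(c_{(2,\alpha)})\rtimes p_\alpha$, is an isomorphism of $G$-coalgebras; composing it with the isomorphism of \prref{3.2} yields an isomorphism $\ul{\psi}:\ul{C}\to D:=C_e\rtimes_{f'}k\lan G\ran$ onto a crossed coproduct with $f'$ normalized. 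The point to check is that $\ul{\psi}$ may be taken to be the identity in degree $e$: using $f_{e,e}=u_e$ (which holds by associativity of convolution and \equref{3.2.2}), hence $g_{e,e}=v_e$, a short convolution calculation shows that the degree-$e$ component $\psi_e$ is the identity of $C_e$; and by \prref{3.2} the structure maps of $D$ are again the maps $\lambda_\alpha$ of \equref{3.4.1}, with $\lambda_e=C_e$ since $C_e$ is cocommutative. As (2) is preserved under $G$-coalgebra isomorphisms that are the identity in degree $e$, it then suffices to verify, for $D$ itself, that $\Delta^D_{\alpha,e}=\tau\circ(\lambda_\alpha\ot D_\alpha)\circ\Delta^D_{e,\alpha}$, where $\tau$ flips the two tensor factors. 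Here the formulas of \seref{3}, together with $f'_{\alpha,e}=f'_{e,\alpha}=\varepsilon$ and $\lambda_e=C_e$, give $\Delta^D_{\alpha,e}(c\rtimes p_\alpha)=(c_{(1)}\rtimes p_\alpha)\ot(\lambda_\alpha(c_{(2)})\rtimes p_e)$ and $\Delta^D_{e,\alpha}(c\rtimes p_\alpha)=(c_{(1)}\rtimes p_e)\ot(c_{(2)}\rtimes p_\alpha)$, so the identity collapses to $c_{(1)}\ot\lambda_\alpha(c_{(2)})=c_{(2)}\ot\lambda_\alpha(c_{(1)})$ in $C_e\ot C_e$, i.e. to cocommutativity of $C_e$ (apply $C_e\ot\lambda_\alpha$ to $c_{(1)}\ot c_{(2)}=c_{(2)}\ot c_{(1)}$).

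For (1), let $\lambda_\alpha$ and $\lambda'_\alpha$ be the maps of \equref{3.4.1} attached to convolution invertible morphisms $\ul{u},\ul{u}'$. By (2), which holds for any such choice, both satisfy $\Delta_{\alpha,e}=\tau\circ(\lambda_\alpha\ot C_\alpha)\circ\Delta_{e,\alpha}$; subtracting, $g:=\lambda_\alpha-\lambda'_\alpha$ satisfies $g(c_{(1,e)})\ot c_{(2,\alpha)}=0$ for all $c\in C_\alpha$. I would then precompose with $\varphi_\alpha^{-1}:C_e\to C_\alpha$, $\varphi_\alpha^{-1}(d)=d_{(1,\alpha)}v_\alpha(d_{(2,\alpha^{-1})})$ (for the fixed $\ul{u}$), and apply $C_e\ot u_\alpha$; a coassociativity computation yields, for $d\in C_e$,
$$0=g(d_{(1,e)})\,u_\alpha(d_{(2,\alpha)})\,v_\alpha(d_{(3,\alpha^{-1})})=g(d_{(1,e)})\,\varepsilon(d_{(2,e)})=g(d),$$
the middle step being \equref{3.2.2} applied to $d_{(2,e)}$. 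Hence $\lambda_\alpha=\lambda'_\alpha$.

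The hard part will be the bookkeeping in (2): matching the maps $\lambda_\alpha$ of \equref{3.4.1} with the structure maps of the normalized crossed coproduct, and checking that the comparison isomorphism is the identity in degree $e$ so that no conjugation survives when (2) is transported. Once that is arranged the coalgebra identity reduces to cocommutativity of $C_e$, and everything else — including all of (1) — is routine Sweedler calculus with the coassociativity axioms and \equref{3.2.2}.
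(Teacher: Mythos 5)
Your argument is correct, but it is organized quite differently from the paper's. The paper proves both parts by direct Sweedler calculus inside $\ul{C}$ itself: from cocommutativity of $C_e$ it first derives the auxiliary identity $c_{(1,\alpha^{-1})}\ot c_{(2,\alpha)}\ot c_{(3,e)}=c_{(2,\alpha^{-1})}\ot c_{(3,\alpha)}\ot c_{(1,e)}$ (its \equref{4.3.2}), then proves (1) by inserting $u'_\alpha*v'_\alpha=\varepsilon$ into the formula for $c\cdot\alpha$, sliding the $C_e$-leg past with \equref{4.3.2}, and cancelling $u_\alpha*v_\alpha=\varepsilon$; part (2) is an analogous two-line computation, independent of (1). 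You instead prove (2) first by transporting it along the isomorphism $\ul{C}\cong C_e\rtimes_{f'}k\lan G\ran$ of \thref{3.4} and \prref{3.2} (the key bookkeeping points you flag --- $f_{e,e}=u_e$, hence $\psi_e=C_e$, and the preservation of the $\lambda_\alpha$ for $\alpha\neq e$ under normalization --- all check out), reducing it to cocommutativity of $C_e$ in the crossed coproduct; you then deduce (1) from (2) by a cancellation using $\varphi_\alpha^{-1}$ and \equref{3.2.2}, which also works (equivalently: $g\circ\varphi_\alpha=0$ with $\varphi_\alpha$ surjective onto $C_e$). The paper's route is shorter and self-contained, needing only \equref{4.3.2} and \equref{3.2.2}; yours is more structural and explains \emph{why} (2) holds (it is literally cocommutativity in the normalized crossed-coproduct model) and exhibits (1) as a formal consequence of (2), but at the cost of invoking the full strength of \thref{3.4} and \prref{3.2}, including the unverified ``easy'' claim there that $f$ is a factor set.
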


\begin{proof}
Applying comultiplication maps to the first tensor factor of the cocommutativity relation
$
c_{(1,e)}\ot c_{(2,e)}= c_{(2,e)}\ot c_{(1,e)}$,
we find the following relation
\begin{equation}
c_{(1,\alpha^{-1})}\ot c_{(2,\alpha)}\ot c_{(3,e)} 
=
c_{(2,\alpha^{-1})}\ot c_{(3,\alpha)} \ot c_{(1,e)}.
\eqlabel{4.3.2}
\end{equation}
Let $\ul{u}':\ \ul{C}\to k\lan G\ran$ be another convolution invertible morphism, with
corresponding action $\bullet$. Then
\begin{eqnarray*}
c\cdot \alpha
&=& u_\alpha(c_{(1,\alpha)})c_{(2,e)}v_\alpha(c_{(3,\alpha^{-1})})
u'_\alpha(c_{(4,\alpha)})v'_\alpha(c_{(5,\alpha^{-1})})\\
&\equal{\equref{4.3.2}}& u_\alpha(c_{(1,\alpha)})v_\alpha(c_{(2,\alpha^{-1})})
u'_\alpha(c_{(3,\alpha)})c_{(4,e)}v'_\alpha(c_{(5,\alpha^{-1})})\\
&=& u'_\alpha(c_{(1,\alpha)})c_{(2,e)}v'_\alpha(c_{(3,\alpha^{-1})})=c\bullet\alpha.
\end{eqnarray*}
and this proves (1). (2) is shown as follows.
\begin{eqnarray*}
&&\hspace*{-2cm}
c_{(2,\alpha)}\ot c_{(1,e)}\cdot \alpha
= c_{(4,\alpha)}\ot u_\alpha(c_{(1,\alpha)})c_{(2,e)}v_\alpha(c_{(3,\alpha^{-1})})\\
&\equal{\equref{4.3.2}}& c_{(3,\alpha)}\ot u_\alpha(c_{(1,\alpha)})c_{(4,e)}v_\alpha(c_{(2,\alpha^{-1})})
= c_{(1,\alpha)}\ot c_{(2,e)}.
\end{eqnarray*}
\end{proof}

Let
$\Omega_{\ul{C}}=\{\ul{u}:\ \ul{C}\to k\lan G\ran~|~\ul{c}~{\rm is~a~morphism~of~}G~{\rm
coalgebras}\}$.
$\ul{C}$ is isomorphic to a smash coproduct $G$-coalgebra if and only if $\Omega_{\ul{C}}
\neq \emptyset$. If we know one element of $\Omega_{\ul{C}}$, then we can describe all
the others using cohomology.

\begin{proposition}\prlabel{4.3}
If $\Omega_{\ul{C}}\neq\emptyset$, then there is a bijection
$$\phi:\ \Omega_{\ul{C}}\to Z^1(G,\units(C_e^*)).$$
\end{proposition}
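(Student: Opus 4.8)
The plan is to trivialize $\Omega_{\ul C}$ by choosing a base point and then record the ``difference'' between an arbitrary element of $\Omega_{\ul C}$ and this base point as a $1$-cocycle. First I would unwind what $\Omega_{\ul C}$ is: a family $\ul u=(u_\alpha)_{\alpha\in G}$ with $u_\alpha\in C_\alpha^*$ lies in $\Omega_{\ul C}$ exactly when $u_e=\varepsilon$ and $u_{\alpha\beta}(c)=u_\alpha(c_{(1,\alpha)})u_\beta(c_{(2,\beta)})$ for all $c\in C_{\alpha\beta}$; putting $\beta=\alpha^{-1}$ shows every such $\ul u$ is convolution invertible with $v_\alpha=u_{\alpha^{-1}}$. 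Since $\Omega_{\ul C}\neq\emptyset$ (equivalently, by \thref{3.5}, $\ul C$ is isomorphic to a smash coproduct $G$-coalgebra), fix $\ul u^{0}=(u^0_\alpha)\in\Omega_{\ul C}$; by \leref{4.3} the right $G$-module coalgebra structure $c\cdot\alpha=\lambda_\alpha(c)$ on $C_e$ given by \equref{3.4.1} is independent of this choice, so $\units(C_e^*)$ is a well-defined left $G$-module. For $\ul u\in\Omega_{\ul C}$ I then set $\phi(\ul u)=(t_\alpha)_{\alpha\in G}$ with
$$t_\alpha\in C_e^*,\qquad t_\alpha(c)=u_\alpha(c_{(1,\alpha)})\,u^0_{\alpha^{-1}}(c_{(2,\alpha^{-1})})\quad(c\in C_e).$$

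Next I would check that $\phi$ lands in $Z^1$. First, $t_\alpha$ is convolution invertible, its inverse being $c\mapsto u^0_\alpha(c_{(1,\alpha)})u_{\alpha^{-1}}(c_{(2,\alpha^{-1})})$: expanding the convolution product and regrouping the comultiplications by coassociativity, the inner $\Delta_{\alpha^{-1},\alpha}$-pair collapses against $\ul u^{0}$ via \equref{3.2.2}, and what is left collapses against $\ul u$ via \equref{3.2.2} again. The key auxiliary fact is the \emph{inversion formula} $u_\alpha(c)=t_\alpha(c_{(1,e)})u^0_\alpha(c_{(2,\alpha)})$ for $c\in C_\alpha$, proved the same way (collapse an inner $\Delta_{\alpha^{-1},\alpha}$-pair against $\ul u^{0}$, then use the counit). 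Using this to rewrite $u_{\alpha\beta}(c)=u_\alpha(c_{(1,\alpha)})u_\beta(c_{(2,\beta)})$ for $c\in C_{\alpha\beta}$, together with the dual expansions of $u^0_{\alpha\beta}$ and of $t_{\alpha\beta}$, and then rearranging legs by coassociativity and by \leref{4.3}(2) (which transposes an adjacent $C_\alpha$-leg and $C_e$-leg at the cost of a $\lambda_\alpha$) so as to bring the two $u^0$-legs together and recognize a $t_\alpha$-factor, one arrives at $t_{\alpha\beta}(c)=t_\beta(\lambda_\alpha(c_{(1,e)}))\,t_\alpha(c_{(2,e)})$. Since $C_e$ is cocommutative this is precisely $t_{\alpha\beta}=t_\alpha*(\alpha\cdot t_\beta)$, and taking $\alpha=\beta=e$ forces $t_e=\varepsilon$; hence $\phi(\ul u)\in Z^1(G,\units(C_e^*))$.

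Bijectivity then splits into two parts. Injectivity is immediate from the inversion formula, which recovers $u_\alpha$ from $t_\alpha$ and the fixed $u^0_\alpha$. For surjectivity, given $t=(t_\alpha)\in Z^1(G,\units(C_e^*))$ I define $u_\alpha(c)=t_\alpha(c_{(1,e)})u^0_\alpha(c_{(2,\alpha)})$ for $c\in C_\alpha$; then $u_e=\varepsilon$ because $t_e=\varepsilon$, and running the computation of the previous step in reverse --- where the cocycle identity for $t$ now does the work that the multiplicativity of $\ul u$ did before --- shows $u_{\alpha\beta}(c)=u_\alpha(c_{(1,\alpha)})u_\beta(c_{(2,\beta)})$, so that $\ul u\in\Omega_{\ul C}$; finally a short collapse of an inner $\Delta_{\alpha,\alpha^{-1}}$-pair against $\ul u^{0}$, followed by the counit, gives $\phi(\ul u)=t$.

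The only genuine obstacle is the Sweedler-notation bookkeeping in the cocycle identity and in its mirror image for surjectivity: one must carry several iterated comultiplications through different degrees, use coassociativity to make the correct legs adjacent, invoke \leref{4.3}(2) to slide a $C_\alpha$-leg past a $C_e$-leg, and only at the very end appeal to cocommutativity of $C_e$ to reconcile the resulting order with the standard normalization $t_{\alpha\beta}=t_\alpha*(\alpha\cdot t_\beta)$. Everything else --- invertibility of $t_\alpha$, the inversion formula, injectivity --- is routine.
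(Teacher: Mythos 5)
Your proposal is correct and takes essentially the same route as the paper: you use the identical formula $\theta_\alpha(c)=u_\alpha(c_{(1,\alpha)})u^0_{\alpha^{-1}}(c_{(2,\alpha^{-1})})$ for $\phi$ and the identical inversion formula $u_\alpha(c)=\theta_\alpha(c_{(1,e)})u^0_\alpha(c_{(2,\alpha)})$ for $\phi^{-1}$, with the cocycle verification being the same computation read in the opposite direction (your ordering $t_{\alpha\beta}=t_\alpha*(\alpha\cdot t_\beta)$ versus the paper's $(\alpha\cdot\theta_\beta)*\theta_\alpha=\theta_{\alpha\beta}$ being reconciled by cocommutativity of $C_e$). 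You are in fact slightly more complete than the paper, which merely states the inverse formula without checking convolution invertibility of $\theta_\alpha$ or bijectivity.
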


\begin{proof}
Fix $\ul{u}^0\in \Omega_{\ul{C}}$, For $\ul{u}\in \Omega_{\ul{C}}$, we define
$\phi(\ul{u})=\theta:\ G\to \GG_m(C_e^*)$ by the formula
$$\theta_\alpha(c)=u_\alpha(c_{(1,\alpha)})u^0_{\alpha^{-1}}(c_{(2,\alpha^{-1})}).$$
We show that $\theta_\alpha$ is a 1-cocycle. For all $c\in C_e$, we have
\begin{eqnarray*}
&&\hspace*{-10mm}
((\alpha\cdot \theta_\beta)*\theta_\alpha)(c)
=\theta_\beta(c_{(1,e)}\cdot\alpha)\theta_\alpha(c_{(2,e)})\\
&=&\theta_\beta\bigl(u_\alpha(c_{(1,\alpha)})c_{(2,e)}u_{\alpha^{-1}}(c_{(3,\alpha^{-1})}\bigr)
\theta_\alpha(c_{(4,e)})\\
&=&
u_\alpha(c_{(1,\alpha)}) u_\beta(c_{(2,\beta)})u^0_{\beta^{-1}}(c_{(3,\beta^{-1})}
u_{\alpha^{-1}}(c_{(4,\alpha^{-1})})u_\alpha(c_{(5,\alpha)})u^0_{\alpha^{-1}}(c_{(6,\alpha^{-1})})\\
&=&
u_\alpha(c_{(1,\alpha)}) u_\beta(c_{(2,\beta)})u^0_{\beta^{-1}}(c_{(3,\beta^{-1})}
u^0_{\alpha^{-1}}(c_{(4,\alpha^{-1})})\\
&=& u_{\alpha\beta}(c_{(1,\alpha\beta)})u^0_{\beta^{-1}\alpha^{-1}}(c_{(2,
\beta^{-1}\alpha^{-1})})=\theta_{\alpha\beta}(c).
\end{eqnarray*}
It follows that $\alpha\cdot\theta_\beta*\theta_\alpha=\theta_{\alpha\beta}$, which is
precisely the cocycle relation. $\phi^{-1}$ is given by the formula
$\phi^{-1}(\theta)=\ul{u}$, with
$u_\alpha(c)=\theta_\alpha(c_{(1,e)})u^0_\alpha(c_{(2,\alpha)})$.t
\end{proof}

On $\Omega_{\ul{C}}$, we define the following equivalence relation:
$\ul{u}\sim \ul{u}'$ if and only if there exists a convolution invertible $f:\ C_e\to k$
such that
$$f(c_{(1,e)})u_\alpha(c_{(2,\alpha)})=u'_\alpha(c_{(1,e)})f(c_{(2,\alpha)}),$$
for all $\alpha\in G$ and $c\in C_\alpha$. We denote
$$\ul{\Omega}_{\ul {C}}= {\Omega}_{\ul {C}}/\sim.$$

\begin{proposition}\prlabel{4.4}
If $\Omega_{\ul{C}}\neq\emptyset$, then there is a bijection
$$\ul{\phi}:\ \ul{\Omega}_{\ul{C}}\to H^1(G,\units(C_e^*)).$$
\end{proposition}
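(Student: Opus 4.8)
The plan is to push the bijection $\phi\colon\Omega_{\ul C}\to Z^1(G,\units(C_e^*))$ of the preceding proposition down to the quotients. Since $C_e$ is cocommutative, $C_e^*$ is a commutative algebra, so $\units(C_e^*)$ is an abelian $G$-module, $H^1(G,\units(C_e^*))=Z^1(G,\units(C_e^*))/B^1(G,\units(C_e^*))$, and, in the convention of the preceding proof where a $1$-cocycle satisfies $\theta_{\alpha\beta}=(\alpha\cdot\theta_\beta)*\theta_\alpha$, the coboundaries are the families $\delta_0(g)_\alpha=(\alpha\cdot g)*g^{-1}$ with $g\in\units(C_e^*)$. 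Define $\ul\phi$ by $\ul\phi([\ul u])=[\phi(\ul u)]$. Then the whole statement follows once we prove
$$\ul u\sim\ul u'\ \Longleftrightarrow\ \phi(\ul u)*\phi(\ul u')^{-1}\in B^1(G,\units(C_e^*))\qquad(\ul u,\ul u'\in\Omega_{\ul C}).$$
Here ``$\Rightarrow$'' makes $[\ul u]\mapsto[\phi(\ul u)]$ well defined, ``$\Leftarrow$'' makes it injective, and surjectivity is immediate since $\phi$ already maps onto $Z^1(G,\units(C_e^*))$. (That $\sim$ is an equivalence relation --- reflexive via $g=\varepsilon_{C_e}$, symmetric via $g\mapsto g^{-1}$, transitive via convolution products, using the cocommutativity of $C_e$ --- is routine, and was in any case asserted in the text preceding the statement.)

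So the real content is the displayed equivalence. For ``$\Leftarrow$'', assume $\theta'_\alpha=\theta_\alpha*\big((\alpha\cdot g)*g^{-1}\big)$, where $\theta=\phi(\ul u)$, $\theta'=\phi(\ul u')$ and $g\in\units(C_e^*)$, and put $\ul u=\phi^{-1}(\theta)$, $\ul u'=\phi^{-1}(\theta')$. By the formula for $\phi^{-1}$ from the preceding proposition, $u_\alpha(c)=\theta_\alpha(c_{(1,e)})u^0_\alpha(c_{(2,\alpha)})$ and $u'_\alpha(c)=\theta'_\alpha(c_{(1,e)})u^0_\alpha(c_{(2,\alpha)})$. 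I would substitute these into the two sides $g(c_{(1,e)})u_\alpha(c_{(2,\alpha)})$ and $u'_\alpha(c_{(1,\alpha)})g(c_{(2,e)})$ of the relation defining $\ul u\sim\ul u'$ (for $c\in C_\alpha$), replace the action $c\cdot\alpha=\lambda_\alpha(c)$ by its defining formula $\lambda_\alpha(c)=u^0_\alpha(c_{(1,\alpha)})c_{(2,e)}v^0_\alpha(c_{(3,\alpha^{-1})})$, and then use the morphism property $u^0_\alpha(c_{(1,\alpha)})u^0_\beta(c_{(2,\beta)})=u^0_{\alpha\beta}(c)$ of $\ul{u}^0$ together with the cocommutativity of $C_e$ to slide the $C_e$-legs of the various comultiplications past each other; both sides then collapse to the same scalar, so $g$ witnesses $\ul u\sim\ul u'$. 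The implication ``$\Rightarrow$'' is this computation run backwards: from a witness $g$ of $\ul u\sim\ul u'$, expand $u_\alpha$ and $u'_\alpha$ through $\phi$, apply $\varepsilon$ in the appropriate tensor slots, and read off $\phi(\ul u')_\alpha=\phi(\ul u)_\alpha*\big((\alpha\cdot g)*g^{-1}\big)$, whence $\phi(\ul u)*\phi(\ul u')^{-1}\in B^1(G,\units(C_e^*))$.

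I expect the only genuine obstacle to be bookkeeping: tracking the Sweedler indices $c_{(i,e)}$ and $c_{(i,\alpha)}$ produced by the various coproducts $\Delta_{e,\alpha}$, $\Delta_{\alpha,e}$ and $\Delta_{e,e}$, and pinpointing the step where cocommutativity of $C_e$ turns the a priori two-sided compatibility condition that defines $\sim$ into the one-sided coboundary relation. No conceptual input is needed beyond what already appears in the proof of the preceding proposition.
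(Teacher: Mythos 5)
Your proposal is correct and follows essentially the same route as the paper: descend the bijection $\phi$ of the previous proposition to the quotient by showing that the relation $\sim$ corresponds exactly to the cocycles differing by an element of $B^1(G,\units(C_e^*))$, the computation resting on the formula for $\phi$, the morphism property of $\ul{u}^0$, and cocommutativity of $C_e$. The only (harmless) difference is that the paper carries out the computation only against the fixed basepoint $\ul{u}^0$ (i.e.\ $\ul{u}\sim\ul{u}^0$ iff $\phi(\ul{u})\in B^1$), whereas you state and verify the pairwise version, which is what a fully spelled-out proof of well-definedness and injectivity actually requires.
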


\begin{proof}
$\ul{u}\sim \ul{u}^0$ if and only if there exists $f\in C_e^*$ with convolution inverse $g$
such that
$$u_\alpha^0(c)=f(c_{(1,e)})u_\alpha(c_{(2,\alpha)})g(c_{(3,e)}),$$
for all $\alpha\in G$, $c\in C_\alpha$. This is equivalent to
\begin{eqnarray*}
\theta_\alpha(c)&=&
u_\alpha(c_{(1,\alpha)}) f(c_{(2,e)})u_{\alpha^{-1}}(c_{(3,\alpha^{-1})})g(c_{(4,e)})\\
&=&(a\cdot f)(c_{(1,e)})g(c_{(2,e)})= ((a\cdot f)*g)(c).
\end{eqnarray*}
This is equivalent to the existence of $f\in C_e^*$ with convolution inverse $g$
such that $\theta_\alpha=(\alpha\cdot f)*g$, which means precisely that $\theta_\alpha
\in B^1(G,C_e^*)$.
\end{proof}


\begin{thebibliography}{99}
\bibitem{CDL}
S. Caenepeel, M. De Lombaerde, A categorical approach to Turaev's Hopf
group coalgebras, {\sl Comm. Algebra} {\bf 34} (2006), 2631--2657.

\bibitem{CJW}
S. Caenepeel, K. Janssen, and S.H. Wang, Group corings, {\sl Appl.
Categorical Structures} {\bf 16} (2008), 65--96.

\bibitem{DNRV}
S. D\u asc\u alescu, C. N\u ast\u asescu, \c S. Raianu, F. Van Oystaeyen,
Graded coalgebras and Morita-Takeuchi contexts, {\sl Tsukuba Math. J.} {\bf 19} (1995), 395--407. 

\bibitem{Doi1}
Y. Doi, Cleft comodule algebras and Hopf modules, {\sl Comm. Algebra} {\bf 12} (1984),
1155--1169.

\bibitem{Doi2}
Y. Doi, Cleft comodule algebras for a bialgebra, {\sl Comm. Algebra} {\bf 14} (1986),
801-817.

\bibitem{MiSt}
G. Militaru and D. \c Stefan, {Extending modules for Hopf Galois
extensions}, {\sl Comm. Algebra} {\bf 22} (1994), 5657--5678.

\bibitem{NasTor}
C. N\u ast\u asescu, B. Torrecillas, Graded coalgebras,
{\sl Tsukuba Math. J.} {\bf 17} (1993), 461--479.

\bibitem{NVO}
C. N\v ast\v asescu, F. Van Oystaeyen, ``Graded ring theory", 
{\sl Library Math.} {\bf 28}, North Holland, Amsterdam, 1982.

\bibitem{Turaev}
V.G. Turaev, Homotopy field theory in dimension 3 and crossed
group-categories, preprint arXiv:math. GT/0005291.

\bibitem{Virelizier}
A. Virelizier, Hopf group-coalgebras, {\sl J. Pure Appl. Algebra}
{\bf 171} (2002), 75--122.

\end{thebibliography}
\end{document}